\numberwithin{equation}{section}
\newcommand{\norm}[2]{\left\lvert \left\lvert #1 \right\rvert \right\rvert_{#2}}
\newcommand{\abs}[1]{\left\lvert #1 \right\rvert}
\newcommand{\dx}[1]{\ \text{d}#1}
\newcommand{\laplace}{\Delta}
\newcommand{\grad}{\nabla}
\newcommand{\inner}[3]{\left( #1 , #2 \right)_{#3}}
\newcommand{\scalar}[2]{\left( #1, #2 \right)}
\newcommand{\dual}[3]{\left\langle #1, #2 \right\rangle_{#3}}
\newcommand{\embedding}{\hookrightarrow}
\newtheorem{Theorem}{Theorem}[section]
\newtheorem{Corollary}[Theorem]{Corollary}
\newdefinition{Remark}[Theorem]{Remark}
\newtheorem{Lemma}[Theorem]{Lemma}
\newtheorem{Assumption}[Theorem]{Assumption}
\journal{Journal of Mathematical Analysis and Applications}
\begin{document}

\begin{frontmatter}
	
	\title{Asymptotic Analysis for Optimal Control of the Cattaneo Model}
	
	\author[ITWM]{Sebastian Blauth}\corref{mycorref}
	\cortext[mycorref]{Corresponding author}
	\ead{sebastian.blauth@itwm.fraunhofer.de}
	
	\author[RPTU]{René Pinnau}
	\ead{pinnau@mathematik.uni-kl.de}
	
	\author[RPTU]{Matthias Andres}
%	\ead{andres@mathematik.uni-kl.de}
	
	\author[BUW]{Claudia Totzeck}
	\ead{totzeck@uni-wuppertal.de}
	
	\address[ITWM]{Fraunhofer Institute for Industrial Mathematics ITWM, Fraunhofer-Platz 1, D-67663 Kaiserslautern}
	\address[RPTU]{RPTU Kaiserslautern Landau, Gottlieb-Daimler-Str. 48, D-67663 Kaiserslautern}
	\address[BUW]{University of Wuppertal, Gau{\ss}straße 20, D-42119 Wuppertal}
	
	{
		This is a post-peer-review, pre-copyedit version of an article published in Journal of Mathematical Analysis and Applications. The final version is available online at \url{https://doi.org/10.1016/j.jmaa.2023.127375}.
	}
	
	\begin{abstract}
		We consider an optimal control problem with tracking-type cost functional constrained by the Cattaneo equation, which is a well-known model for delayed heat transfer. In particular, we are interested in the asymptotic behaviour of the optimal control problems for a vanishing delay time $\tau \rightarrow 0$. First, we show the convergence of solutions of the Cattaneo equation to the ones of the heat equation. Assuming the same right-hand side and compatible initial conditions for the equations, we prove a linear convergence rate. Moreover, we show linear convergence of the optimal states and optimal controls for the Cattaneo equation towards the ones for the heat equation. We present numerical results for %both, the forward and 
the optimal control problem confirming these linear convergence rates.
	\end{abstract}
	
	\begin{keyword}
		Partial Differential Equations \sep Optimal Control \sep Cattaneo Model \sep Asymptotic Analysis \sep Numerical Analysis \sep Heat Transfer
	\end{keyword}
\end{frontmatter}

\section{Introduction}
The Cattaneo model introduced in \cite{Cattaneo1958Sur} describes delayed heat transfer based on a modified Fourier's law with a delay time $\tau$. It is used to adjust the non-physical behaviour of infinite speed of propagation the heat equation exhibits. 

Accurate models for heat transfer are needed, e.g., for modelling medical treatments like thermoablation of tumors. Therefore, non-linear couplings of the heat equation with approximations of the radiative heat transfer equation are often employed. Various authors have investigated the Cattaneo equation and the effect of the delay parameter on the speed of propagation, also in the complex situation of thermoablation (see, e.g., \cite{Cattaneo1958Sur,LopezMolina2008Assessment,Tung2009Modeling,Li2017Non,Quintanilla2006note}). In real life applications, these models often depend on various unknown parameters. 
%Thus, there is the need to investigate the possibility of parameter identification, e.g., via a formulation as optimal control problem. 
Thus, a parameter identification via a formulation as inverse problem is of high interest. Recently, the Cattaneo model has been used in this context in \cite{Andres2022Cattaneo,Andres2021Improving}, where such parameter identification problems have been investigated. However, to the best of our knowledge, the asymptotic analysis of optimal control problems for the Cattaneo model is novel and has not yet been investigated in the previous literature.

The Cattaneo equation is a damped wave equation which reads
\begin{equation}
	\label{eq:cattaneo}
	\begin{alignedat}{2}
		\tau y_\tau'' + y_\tau' - \Delta y_\tau =\ &u_\tau \quad &&\text{ in } (0, T) \times \Omega, \\
		y_\tau =\ &0 \quad &&\text{ on } (0,T) \times \partial\Omega, \\
		y_\tau(0,\cdot) =\ &y_0 \quad &&\text{ in } \Omega, \\
		y_\tau'(0,\cdot) =\ &y_1 \quad &&\text{ in } \Omega,
	\end{alignedat}
\end{equation}
where $y_\tau$ denotes the temperature, the superscript $'$ denotes the partial time derivative, i.e., $y_\tau' := \frac{\partial}{\partial t} y_\tau$, $u_\tau$ is a heat source and $\tau>0$ is the delay parameter. Moreover, $\Omega\subset \mathbb{R}^d$ is a bounded Lipschitz domain with boundary $\partial\Omega$, $T>0$ is the time horizon of consideration, and $y_0$ and $y_1$ are the 
initial temperature distribution and initial temperature drift, respectively. 

Formally setting $\tau=0$ and neglecting the second initial condition yields the parabolic heat equation
\begin{equation}
	\label{eq:heat}
	\begin{alignedat}{2}
		y' - \Delta y =\ &u \quad &&\text{ in } (0, T) \times \Omega, \\
		y =\ &0 \quad &&\text{ on } (0,T) \times \partial\Omega,\\
		y(0,\cdot) =\ &y_0 \quad &&\text{ in } \Omega.
	\end{alignedat}
\end{equation}
Hence, the Cattaneo equation can be viewed as perturbation of the heat equation. Therefore, we expect that solutions of the Cattaneo equation \eqref{eq:cattaneo} converge to the ones of the heat equation \eqref{eq:heat} as $\tau \to 0$ and $u_\tau \to u$. In the remainder of this paper we write $y_\tau$ and $y$ for solutions corresponding to the Cattaneo and heat equation, respectively.

\begin{Remark}
	For the sake of simplicity, we restrict here to homogeneous Dirichlet boundary conditions. An extension of our results to other boundary conditions, such as Neumann or Robin boundary conditions, is straightforward.
\end{Remark}

\begin{Remark}
	Note that the Cattaneo equation can be considered a first order approximation for delayed heat transfer for the modified Fourier's law
	\begin{equation*}
		-\grad y(t,x) = q(t + \tau, x) = q(t,x) + \tau q'(t,x) + \mathcal{O}(\tau^2) \approx q(t,x) + \tau q'(t,x),
	\end{equation*}
	where $q$ denotes the heat flux. The heat equation is derived from the classical Fourier's law $q(t,x) = -\grad y(t,x)$. Therefore, there is a natural interest in studying the behavior of the Cattaneo equation for a vanishing delay time $\tau \to 0$. We refer the reader to \cite{Blauth2018Optimal} for a detailed derivation of the equations. 
\end{Remark}

The forward problems will serve as PDE constraints for the corresponding optimal control problems. For the Cattaneo equation this reads
\begin{equation}
	\label{eq:ocp_cattaneo}
	\begin{aligned}
		\min_{(y_\tau,u_\tau)}\ J_{\tau, \nu}(y_\tau,u_\tau) &= \frac{1}{2} \norm{y_\tau - y_d}{L^2(0, T; L^2(\Omega))}^2 + \frac{\nu \tau}{2} \norm{y_\tau(T) - y_d(T)}{L^2(\Omega)}^2 + \frac{\lambda}{2} \norm{u_\tau}{L^2(0,T;L^2(\Omega))}^2 \\
		&\text{ s.t. } \eqref{eq:cattaneo} \text{ and } u_\tau\in U_\mathrm{ad},
	\end{aligned}
\end{equation}
where $\lambda\geq 0$ is a parameter which allows to adjust the control costs, $y_d$ is the desired state, and $U_\mathrm{ad}$ is the set of admissible controls. Moreover, $\nu \in \Set{0,1}$ is a parameter that is used to distinguish whether a tracking of the state variable at the end of the time horizon is considered ($\nu = 1$) or not ($\nu=0$). %For this problem we again expect that for $\tau\to 0$ the solution of the optimization problem with the Cattaneo equation converges towards the solution of the corresponding problem with the heat equation as a constraint, i.e.,
We shall prove that for $\tau \to 0$ the optimal pairs $(y_\tau, u_\tau)$ converge linearly to the corresponding optimal state and optimal control of the optimal control problem constrained by the heat equation, i.e.,
\begin{equation}
	\label{eq:ocp_heat}
	\min_{(y,u)}\ J(y,u) = \frac{1}{2}\norm{y-y_d}{L^2(0,T;L^2(\Omega))}^2 + \frac{\lambda}{2} \norm{u}{L^2(0,T;L^2(\Omega))}^2 \quad \text{ s.t. } \eqref{eq:heat} \text{ and } u\in U_\mathrm{ad}.
\end{equation}
Naturally, we assume that the sets of admissible controls $U_\mathrm{ad}$ coincide for problems \eqref{eq:ocp_cattaneo} and \eqref{eq:ocp_heat} in order to investigate the asymptotic behavior of these problems in the limit $\tau \to 0$.

Note that the starting optimal control problem \eqref{eq:ocp_cattaneo} is constrained by a hyberbolic equation,  while the limit problem \eqref{eq:ocp_heat} is of parabolic nature. This change of character requires additional compatibility conditions and a thorough derivation of appropriate estimates to establish the linear convergence rate.

The rest of the paper is structured as follows: First, we introduce the notation and the Sobolev spaces used for the analysis in Section~\ref{sec:preliminaries}. Then, we investigate the well-posedness of both forward and optimal control problems for a fixed $\tau$ in Section~\ref{sec:well posedness}. The main result of this paper is given in Section~\ref{sec:asymptotic analysis}, where we prove that, under some additional assumptions, the solutions of \eqref{eq:cattaneo} and \eqref{eq:ocp_cattaneo} converge linearly to the ones of \eqref{eq:heat} and \eqref{eq:ocp_heat}, respectively. For the sake of better readability, some lengthy proofs of Section~\ref{sec:asymptotic analysis} are placed in Appendix~\ref{app:proofs}. Finally, in Section~\ref{sec:numerical results} we discuss numerical results for $\tau \to 0$ which underline the linear convergence proved analytically.  Conclusions and an outlook can be found in Section~\ref{sec:conclusion}.

\subsection{Preliminaries \& Notations}
\label{sec:preliminaries}

Our notation is based on \cite{Hinze2009Optimization}: $X$ is a Banach space and $H$ a Hilbert space. The norm on $X$ is denoted by $\norm{\cdot}{X}$ and the scalar product on $H$ by $\inner{\cdot}{\cdot}{H}$. In particular, we define $\scalar{\cdot}{\cdot} := \inner{\cdot}{\cdot}{L^2(\Omega)}$. The dual space of $X$ is denoted by $X^*$ and the duality pairing by $\varphi (y) = \dual{\varphi}{y}{X^*,X}$ for $\varphi \in X^*$ and $y\in X$. For the dual space of $H^1_0(\Omega)$ we write $H^{-1}(\Omega)$ and for better readability we define $\dual{\varphi}{y}{} := \dual{\varphi}{y}{H^{-1}(\Omega), H^1_0(\Omega)}$. We write $y_k \rightharpoonup y$ in $X$ for the weak convergence of the sequence $(y_k) \subset X$ to $y \in X$. With $C$ we denote a generic and positive constant, possibly having different values and we denote by $C=C(\alpha)$ the dependence of the constant $C$ on a parameter $\alpha$. The space $C^\infty_0(\Omega)$ denotes the space of infinitely differentiable functions having compact support in $\Omega$. For our analysis we also need Lebesgue spaces of vector valued functions $L^p(0,T;X)$ that are defined, e.g., in \cite{Hinze2009Optimization}. 
%call a function $w\in L^1(0,T;X)$ the $k$-th order weak time derivative of $y$ if it holds
%$$ \int\limits_{0}^{T} y(t) \frac{d^k}{dt^k} \varphi(t) \dx{t} = \left(-1 \right)^k \int\limits_{0}^{T} w(t) \varphi(t) \dx{t} \quad \text{ for all } \varphi \in C^\infty_0((0,T)),
%$$
%and in this case we 
We denote by $\frac{d^k}{dt^k} y$ the $k$-th order weak time derivative of $y$. In particular, we write $\frac{d}{dt} y := y'$ as well as $\frac{d^2}{dt^2} y := y''$ for the weak time derivatives. For $m\in \mathbb{N}$ and $1\leq p \leq \infty$ we define the space 
$$W^{m,p}(0,T;X):= \Set{y\in L^p(0,T;X) | \frac{d^k}{dt^k} y \in L^p(0,T;X) \text{ for } k=1,\dots,m},
$$ 
i.e., the space of all $y\in L^p(0,T;X)$ whose weak time derivatives are in $L^p(0,T;X)$ up to order $m$, equipped with the usual Sobolev norm. For $p=2$ we also write $H^m(0,T;H) := W^{m,2}(0,T;H)$ and recall that the spaces $H^m(0,T;H)$ are Hilbert spaces. Throughout the rest of this paper we omit the explicit time dependence of the functions in several derivations for a better readability.
Furthermore, the space $C^k([0,T];X)$ is defined as the space of $k$-times continuously differentiable functions on $[0,T]$ with values in $X$ and the space $C^\infty_0((0,T);X)$ is defined as the space of infinitely differentiable functions with compact support in $(0,T)$ and values in $X$.

\section{Well-Posedness of the State Eqations and Optimal Control}
\label{sec:well posedness}

In this section, we briefly discuss the well-posedness of the optimal control problems \eqref{eq:ocp_cattaneo} and \eqref{eq:ocp_heat} with $\tau >0$ fixed and refer the reader to \cite{Blauth2018Optimal} for a more detailed discussion. In order to simplify the notation, throughout the rest of the paper we denote with $a$ the usual bilinear form for the %Laplacian 
Laplace operator with homogeneous Dirichlet data, i.e.,
\begin{equation}
	\label{eq:definition_bilinear_form}
	a\colon H^1_0(\Omega)\times H^1_0(\Omega) \to \mathbb{R}; \quad (y,v) \mapsto a[y,v] := (\grad y,\grad v).
\end{equation}
It is well-known that the bilinear form $a$ is symmetric, continuous, and coercive (see, e.g., \cite{Evans2010Partial}).
%weiter nach vorna: Finally, we also want to remark that we restrict ourselves to homogeneous Dirichlet boundary conditions throughout the rest of this paper, for the sake of simplicity. An extension of our results to other \qe{standard} boundary conditions, e.g., Neumann or Robin boundary conditions, is straightforward.

\subsection{Well-Posedness of the Cattaneo Equation}
\label{ssec:well-posed}

%In order to investigate optimal control problems with the Cattaneo equation as constraint, we introduce a weak formulation of the equation and investigate its well-posedness. 
We seek a weak solution of the Cattaneo equation \eqref{eq:cattaneo} in the space
\begin{equation*}
	Y(0,T) := \Set{v\in L^2(0,T;H^1_0(\Omega)) | v'\in L^2(0,T;L^2(\Omega)) \text{ and } v''\in L^2(0,T;H^{-1}(\Omega))}.
\end{equation*}
Particularly, a weak solution of the Cattaneo equation satisfies the variational problem
\begin{equation}
	\label{eq:weak_cattaneo}
	\text{Find }y_\tau \in Y(0,T) \text{ s.t. } \quad \int\limits_{0}^{T} \tau \dual{y_\tau''}{v}{} + \scalar{y_\tau'}{v} + a[y_\tau,v] \dx{t} = \int\limits_{0}^{T} \scalar{u_\tau}{v} \dx{t}
	\quad\text{ for all } v\in L^2(0,T;H^1_0(\Omega))
\end{equation}
and the initial conditions $y_\tau(0)=y_0$ and $y_\tau'(0) = y_1$. 

\begin{Remark}
Note that if we have $y_\tau\in Y(0,T)$, we also have $y_\tau\in C([0,T];L^2(\Omega))$ and $y_\tau'\in C([0,T];H^{-1}(\Omega))$ (cf. \cite{Blauth2018Optimal} or \cite{Evans2010Partial}). Therefore, the time evaluation of $y_\tau$ and $y_\tau'$ is well-defined and justifies the initial conditions. 
\end{Remark}

The well-posedness of this weak formulation can be proved with the help of the Faedo-Galerkin method (see, e.g., \cite{Evans2010Partial,Lions1971Optimal,Faedo1949Un}). We obtain the following result, which is proved in \cite[Theorem~3.12]{Blauth2018Optimal}. 
\begin{Theorem}
	\label{thm:well_posedness_cattaneo}
	Let $\tau>0$, $y_0\in H^1_0(\Omega)$, $y_1\in L^2(\Omega)$ and $u_\tau\in L^2(0,T;L^2(\Omega))$. Then, there exists a unique weak solution $y_\tau\in Y(0,T)$ of the Cattaneo equation \eqref{eq:cattaneo} which depends continuously on the data. In fact, there exists a constant $C=C(\tau)>0$ such that
	\begin{equation}
		\label{eq:energy_estimate_1}
		\begin{aligned}
			\norm{y_\tau}{Y(0,T)}^2 =\ &\norm{y_\tau}{L^2(0,T;H^1_0(\Omega))}^2 + \norm{y_\tau'}{L^2(0,T;L^2(\Omega))}^2 + \norm{y_\tau''}{L^2(0,T;H^{-1}(\Omega))}^2 \\
			\leq\ &C(\tau) \left( \norm{u_\tau}{L^2(0,T;L^2(\Omega))}^2 + \norm{y_0}{H^1_0(\Omega)}^2 + \norm{y_1}{L^2(\Omega)}^2 \right).
		\end{aligned}
	\end{equation}		
\end{Theorem}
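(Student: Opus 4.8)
The plan is to follow the classical Faedo--Galerkin scheme for damped wave equations (see, e.g., \cite{Evans2010Partial,Faedo1949Un}). Let $(w_k)_{k\in\mathbb{N}}\subset H^1_0(\Omega)$ be the eigenfunctions of $-\laplace$ with homogeneous Dirichlet data, normalised to form an orthonormal basis of $L^2(\Omega)$ and an orthogonal basis of $H^1_0(\Omega)$, and let $V_m := \mathrm{span}\{w_1,\dots,w_m\}$ with $L^2$-orthogonal projection $P_m$ onto $V_m$. For each $m$ one seeks $y_m(t)=\sum_{k=1}^m d_k^m(t)\,w_k$ solving the finite-dimensional system
\begin{equation*}
	\tau\,\scalar{y_m''}{w_k} + \scalar{y_m'}{w_k} + a[y_m,w_k] = \scalar{u_\tau}{w_k}, \qquad k=1,\dots,m,
\end{equation*}
with $y_m(0)=P_m y_0$ and $y_m'(0)=P_m y_1$. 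Since $\tau>0$, the leading coefficient is non-degenerate, so this is a linear second-order ODE system with $L^2$ right-hand side, and standard ODE theory yields a unique solution with $d_k^m\in H^2(0,T)$.

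Next I would derive the a priori estimate. Multiplying the $k$-th equation by $(d_k^m)'(t)$, summing over $k$ and integrating in time gives the energy identity
\begin{equation*}
	\frac{\tau}{2}\norm{y_m'(t)}{L^2(\Omega)}^2 + \integral{0}^{t}\norm{y_m'(s)}{L^2(\Omega)}^2\dx{s} + \frac{1}{2}a[y_m(t),y_m(t)] = \frac{\tau}{2}\norm{P_m y_1}{L^2(\Omega)}^2 + \frac{1}{2}a[P_m y_0,P_m y_0] + \integral{0}^{t}\scalar{u_\tau}{y_m'}\dx{s}.
\end{equation*}
Estimating the source term by Young's inequality, absorbing the resulting $\tfrac12\integral{0}^{t}\norm{y_m'}{L^2(\Omega)}^2\dx{s}$ into the left-hand side, and using coercivity of $a$ together with $\norm{P_m y_0}{H^1_0(\Omega)}\le\norm{y_0}{H^1_0(\Omega)}$ and $\norm{P_m y_1}{L^2(\Omega)}\le\norm{y_1}{L^2(\Omega)}$ bounds $\norm{y_m'}{L^\infty(0,T;L^2(\Omega))}$, $\norm{y_m}{L^\infty(0,T;H^1_0(\Omega))}$ and $\norm{y_m'}{L^2(0,T;L^2(\Omega))}$ by the right-hand side of \eqref{eq:energy_estimate_1}, with a constant depending on $\tau$ (in fact on $1/\tau$, because the bound on $y_m'$ comes from the factor $\tau/2$ in front of $\norm{y_m'(t)}{L^2(\Omega)}^2$). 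Rearranging the Galerkin equation as $\tau y_m'' = u_\tau - y_m' + \laplace y_m$ and pairing with $v\in L^2(0,T;H^1_0(\Omega))$ then controls $\norm{y_m''}{L^2(0,T;H^{-1}(\Omega))}$ by the same quantity, so $(y_m)$ is bounded in $Y(0,T)$.

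Then I would pass to the limit. By reflexivity one extracts a subsequence with $y_m\rightharpoonup y$ in $L^2(0,T;H^1_0(\Omega))$, $y_m'\rightharpoonup y'$ in $L^2(0,T;L^2(\Omega))$ and $y_m''\rightharpoonup y''$ in $L^2(0,T;H^{-1}(\Omega))$, hence $y\in Y(0,T)$. Multiplying the Galerkin equation by $\phi\in C^\infty_0((0,T))$, integrating in time and letting $m\to\infty$ shows that $y$ satisfies \eqref{eq:weak_cattaneo} for all test functions of the form $\phi(t)w_k$; by linearity and density of finite linear combinations of such functions in $L^2(0,T;H^1_0(\Omega))$ this extends to all admissible $v$. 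The initial conditions are recovered from the embeddings $Y(0,T)\embedding C([0,T];L^2(\Omega))$ and $\{v\in Y(0,T)\}\ni v\mapsto v'\in C([0,T];H^{-1}(\Omega))$: integrating \eqref{eq:weak_cattaneo} by parts in time against test functions with $v(T)=0$ and comparing with the corresponding identity for the approximations $y_m$ yields $y(0)=y_0$ and $y'(0)=y_1$ in the standard way. Finally, uniqueness follows from linearity, since the difference $z$ of two solutions solves \eqref{eq:weak_cattaneo} with $u_\tau=0$ and zero initial data. As $z'$ itself is not an admissible test function (it lacks the spatial regularity to be inserted into the weak form), one uses instead the classical choice $v(t):=\integral{t}^{s} z(r)\dx{r}$ for $t\le s$ and $v(t):=0$ for $t>s$ (cf. \cite{Evans2010Partial}); inserting it and integrating by parts produces an energy identity that forces $z\equiv 0$.

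I expect the routine part to be the energy estimates, whose only noteworthy feature is the degeneration of the constant as $\tau\to 0$ — consistent with the statement $C=C(\tau)$ and precisely the phenomenon that motivates the sharper, compatibility-based estimates of Section~\ref{sec:asymptotic analysis}. The main technical care is needed for the treatment of the initial data at the level of regularity of $Y(0,T)$ and for the choice of test function in the uniqueness argument, since $y'$ is not regular enough in space to be used directly.
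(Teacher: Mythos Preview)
Your proposal is correct and follows essentially the same approach the paper indicates: the paper does not reproduce the proof but explicitly attributes it to the Faedo--Galerkin method (citing \cite{Evans2010Partial,Faedo1949Un} and \cite[Theorem~3.12]{Blauth2018Optimal}), and the Galerkin setup in Appendix~\ref{proof:boundedness} matches yours. Your observation that the constant degenerates like $1/\tau$ is exactly the content of Remark~\ref{rem:estimate_not_sufficient}, and your handling of the uniqueness argument via the test function $v(t)=\int_t^s z(r)\dx{r}$ is the standard device for this regularity class.
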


\begin{Remark}\label{rem:estimate_not_sufficient}
	In \cite{Blauth2018Optimal} it is shown that $C(\tau) \to \infty$ as $\tau \to 0$ for the constant $C(\tau)$ in \eqref{eq:energy_estimate_1}. Hence, in order to investigate the limit $\tau\to 0$ we need stronger energy estimates, independent of $\tau$ (see Section~\ref{sec:asymptotic analysis}).
\end{Remark}

\subsection{Well-Posedness of the Heat Equation}

As we are interested in the limit $\tau \to 0$, we introduce the weak formulation of the heat equation (cf.\ \cite{Evans2010Partial}) for completeness. We seek a weak solution of the heat equation in the space
\begin{equation*}
	W(0,T) := \set{v \in L^2(0,T;H^1_0(\Omega)) | v'\in L^2(0,T;L^2(\Omega))}.
\end{equation*}
Particularly, a weak solution of the heat equation satisfies the variational problem
\begin{equation}
	\label{eq:weak_heat}
	\text{Find }y \in W(0,T) \text{ s.t. } \quad \int\limits_{0}^{T} \scalar{y'}{v} + a[y, v] \dx{t} = \int\limits_{0}^{T} \scalar{u}{v} \dx{t} \quad \text{ for all } v\in L^2(0,T;H^1_0(\Omega))
\end{equation}
and the initial condition $y(0) = y_0$, where the bilinear form $a$ is again given by \eqref{eq:definition_bilinear_form}.
The following well-posedness result is well-known, see e.g.~\cite{Evans2010Partial,Lions1971Optimal}.
\begin{Theorem}
	\label{thm:well_posedness_heat}
	%The heat equation \eqref{eq:heat} is well-posed in the following sense: 
	Let $u\in L^2(0,T;L^2(\Omega))$ and $y_0 \in H^1_0(\Omega)$. Then, there exists a unique weak solution of the heat equation \eqref{eq:heat} that depends continuously on the data. Particularly, there exists a constant $C>0$ so that
	\begin{equation*}
		\norm{y}{W(0,T)}^2 = \norm{y}{L^2(0,T;H^1_0(\Omega))}^2 + \norm{y'}{L^2(0,T;L^2(\Omega))}^2 \leq C\left( \norm{u}{L^2(0,T;L^2(\Omega))}^2 + \norm{y_0}{H^1_0(\Omega)}^2 \right).
	\end{equation*}
\end{Theorem}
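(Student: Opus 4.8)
The result is the classical parabolic prototype and can, for instance, be found in \cite{Evans2010Partial}; we only sketch the argument. The plan is to use the Faedo--Galerkin method, as for the Cattaneo equation but simpler, the heat equation being first order in time. First I would fix a sequence $(w_k)_{k\in\mathbb{N}}$ of smooth functions that is orthonormal in $L^2(\Omega)$, orthogonal in $H^1_0(\Omega)$, and whose linear span is dense in $H^1_0(\Omega)$ --- for instance the eigenfunctions of $-\Delta$ with homogeneous Dirichlet data. For each $m\in\mathbb{N}$ I would seek $y_m(t) = \sum_{k=1}^m d_m^k(t)\, w_k$ solving the finite-dimensional system $\scalar{y_m'}{w_k} + a[y_m,w_k] = \scalar{u}{w_k}$ for $k = 1,\dots,m$, with $y_m(0)$ the projection of $y_0$ onto $\mathrm{span}\{w_1,\dots,w_m\}$ (which, for the eigenfunction basis, is simultaneously the $L^2(\Omega)$- and the $H^1_0(\Omega)$-orthogonal projection). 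This is a linear system of ODEs with an $L^2$-in-time right-hand side, so Carath\'eodory's theorem yields a unique solution $d_m \in H^1(0,T;\mathbb{R}^m)$.

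Next I would derive two energy estimates with constants independent of $m$. Testing the Galerkin system with $y_m$ (multiply the $k$-th equation by $d_m^k$ and sum), using the coercivity of $a$, Young's inequality, and Gronwall's lemma, gives a bound for $y_m$ in $L^\infty(0,T;L^2(\Omega)) \cap L^2(0,T;H^1_0(\Omega))$ in terms of $\norm{u}{L^2(0,T;L^2(\Omega))}$ and $\norm{y_0}{L^2(\Omega)}$. To control $y_m'$ in $L^2(0,T;L^2(\Omega))$ --- which is needed for the full $W(0,T)$ norm --- I would instead test with $y_m'$, obtaining $\norm{y_m'(t)}{L^2(\Omega)}^2 + \frac12 \frac{d}{dt}\, a[y_m,y_m] = \scalar{u}{y_m'}$; integrating in time, applying Young's inequality to the right-hand side, and using $a[y_m(0),y_m(0)] \leq C\norm{y_0}{H^1_0(\Omega)}^2$ yields a bound for $y_m$ in $L^\infty(0,T;H^1_0(\Omega))$ and for $y_m'$ in $L^2(0,T;L^2(\Omega))$. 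Hence $(y_m)$ is bounded in $W(0,T)$ uniformly in $m$.

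Then I would pass to the limit: by reflexivity extract a subsequence (not relabelled) with $y_m \rightharpoonup y$ in $L^2(0,T;H^1_0(\Omega))$ and $y_m' \rightharpoonup y'$ in $L^2(0,T;L^2(\Omega))$, so $y \in W(0,T)$. Fixing $N$ and a test function of the form $v = \sum_{k=1}^N \varphi_k(t) w_k$ with $\varphi_k \in C^\infty([0,T])$, the Galerkin identity (summed against the $\varphi_k$ and integrated over $(0,T)$) passes to the limit by linearity and weak convergence; since such $v$ are dense in $L^2(0,T;H^1_0(\Omega))$, the limit $y$ satisfies \eqref{eq:weak_heat}. The initial condition $y(0) = y_0$ is recovered from the embedding $W(0,T) \embedding C([0,T];L^2(\Omega))$, integration by parts in time against a $v$ with $\varphi_k(T) = 0$, and $y_m(0) \to y_0$ in $L^2(\Omega)$. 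Uniqueness follows from linearity: the difference of two solutions solves \eqref{eq:weak_heat} with $u = 0$ and $y_0 = 0$, and testing with this difference itself together with coercivity and Gronwall forces it to vanish. Finally, the claimed estimate follows by taking $\liminf$ as $m\to\infty$ in the a priori bounds and using the weak lower semicontinuity of the norms; this simultaneously establishes continuous dependence on the data.

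Since this is the textbook parabolic case, none of the steps is a genuine obstacle. The only point that deserves attention is the second energy estimate: testing with $y_m'$ forces the bound $a[y_m(0),y_m(0)] \leq C\norm{y_0}{H^1_0(\Omega)}^2$, which is precisely why the hypothesis $y_0 \in H^1_0(\Omega)$ --- rather than merely $y_0 \in L^2(\Omega)$ --- is needed; with only $L^2$-regularity of $y_0$ one obtains $y' \in L^2(0,T;H^{-1}(\Omega))$ but not the $W(0,T)$ bound stated here.
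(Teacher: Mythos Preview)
Your sketch is correct and is precisely the standard Faedo--Galerkin argument from \cite{Evans2010Partial}; the paper does not give its own proof but simply cites this reference, so your approach coincides with what the paper relies on. Your closing remark on why $y_0\in H^1_0(\Omega)$ (rather than just $L^2(\Omega)$) is needed for the $W(0,T)$ bound is apt and worth keeping.
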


\begin{Remark}
	Due to the higher regularity required for the well-posedness of the Cattaneo equation and our interest in the limit $\tau \to 0$, we use a slightly different state space $W(0,T)$ compared to the literature (see, e.g., \cite{Evans2010Partial,Lions1971Optimal}), where usually the space
	$\set{v\in L^2(0,T;H^1_0(\Omega)) | v' \in L^2(0,T;H^{-1}(\Omega))}$
	is used as solution space for the heat equation.
\end{Remark}

\subsection{Optimal Control Problems}

As already mentioned in the introduction, we are interested in the asymptotic behaviour for $\tau\to 0$ in the optimal control context. Therefore, we state the optimal control problems subject to the Cattaneo and the heat equation in the following. Throughout this section, we need the following assumption:
\begin{Assumption}
	\label{ass:regularity}
	Let $y_0 \in H^3(\Omega) \cap H^1_0(\Omega)$, $0 < M < \infty$, and $\bar{u} \in H^1(\Omega)$ be fixed. The set of admissible controls $U_{\mathrm{ad}}$ is a weakly closed and convex subset of the set
	\begin{equation*}
		\hat{U} := \Set{u \in H^1(0, T; H^1(\Omega)) | \norm{u}{H^1(0,T;H^1(\Omega))} \leq M \text{ and } u(0) = \bar{u}}.	
	\end{equation*}
	Further, we assume that
	\begin{equation}
		\label{eq:initial_condition}
		\laplace y_0 + \bar{u} \in H^1_0(\Omega),
	\end{equation}
	and that $y_d \in H^1(0,T;H^1_0(\Omega))$.
\end{Assumption}

\begin{Remark}
	Note that for $u\in \hat{U} \subset H^1(0,T;H^1(\Omega))$ we are allowed to make point evaluations in time due to the embedding $H^1(0,T;H^1(\Omega)) \embedding C([0,T];H^1(\Omega))$. Therefore, we have $u(t) \in H^1(\Omega)$ for all $t\in [0,T]$ and, hence, the evaluation of initial and terminal values of $u$ and the definition of $\hat{U}$ is justified. Analogously, we get that $y_d \in H^1(0,T;H^1_0(\Omega)) \embedding C([0,T];H^1_0(\Omega))$ so that $y_d(t) \in H^1_0(\Omega)$ for all $t\in [0,T]$.
\end{Remark}

\subsubsection{Optimal Control of the Cattaneo equation}
For $\tau > 0$ we consider the family of optimal control problems given by \eqref{eq:ocp_cattaneo} where we consider $y_d \in H^1(0,T;H^1_0(\Omega))$, $y_0 \in H^3(\Omega) \cap H^1_0(\Omega)$, and $y_1 = \laplace y_0 + \bar{u} \in H^1_0(\Omega)$. The motivation for this choice of $y_1$ for the Cattaneo equation is discussed later in Section~\ref{ssec:intro_asymp}. As state space for the optimal control problem we use the space $Y(0,T)$.  Thanks to the continuous embedding $Y(0,T) \embedding L^2(0,T;L^2(\Omega))$ the cost functional in \eqref{eq:ocp_cattaneo} is well-defined. Using standard techniques for linear-quadratic optimal control problems we obtain the following existence and uniqueness result by an application of \cite[Theorem~1.43]{Hinze2009Optimization} (cf.~\cite[Theorem~4.2]{Blauth2018Optimal}).
%\begin{equation}
%	\label{eq:distributed_control_cattaneo}
%	\min_{(y_\tau,u_\tau)}\ J(y_\tau,u_\tau) = \frac{1}{2} \norm{y_\tau - y_d}{L^2(0,T;L^2(\Omega))}^2 + \frac{\lambda}{2} \norm{u_\tau}{L^2(0,T;L^2(\Omega))}^2 \quad \text{ s.t. \eqref{eq:cattaneo} and }u_\tau\in U_\mathrm{ad}, 
%\end{equation}
%where $\lambda \geq 0$ is a  parameter which allows to adjust the control costs, $y_d\in L^2(0,T;L^2(\Omega))$ is the desired state, and $y_0 \in H^1_0(\Omega)$ and $y_1\in L^2(\Omega)$ are given initial conditions for \eqref{eq:cattaneo}. 

\begin{Theorem}
	\label{thm:optimality_cattaneo}
	Let $\tau>0$, $\nu \in \set{0,1}$, $y_0 \in H^3(\Omega) \cap H^1_0(\Omega)$, $y_1 = \laplace y_0 + \bar{u} \in H^1_0(\Omega)$, $y_d\in H^1(0,T;H^1_0(\Omega))$, $\lambda \geq 0$, and let Assumption~\ref{ass:regularity} hold. Then, problem \eqref{eq:ocp_cattaneo} has a unique optimal control $u_\tau^*\in U_\mathrm{ad}$.
\end{Theorem}

One can easily derive the first order optimality conditions for problem \eqref{eq:ocp_cattaneo} using the well-known adjoint approach (see, e.g., \cite{Hinze2009Optimization} or \cite{Troeltzsch2010Optimal}), which is done in \cite[Theorem~4.6]{Blauth2018Optimal}, and we summarize them in the following.
\begin{Theorem}
	\label{thm:fonc_cattaneo}
	Let $\tau>0$, $\nu \in \set{0,1}$, $y_0 \in H^3(\Omega) \cap H^1_0(\Omega)$, $y_1 = \laplace y_0 + \bar{u} \in H^1_0(\Omega)$, $y_d\in H^1(0,T;H^1_0(\Omega))$, $\lambda\geq 0$, and let Assumption~\ref{ass:regularity} hold. The first order optimality conditions for $u_\tau^*\in U_\mathrm{ad}$ being an optimal control of problem \eqref{eq:ocp_cattaneo}	with corresponding optimal state $y_\tau^*\in Y(0,T)$ and optimal adjoint state $p_\tau^*\in Y(0,T)$ are given by:

	\begin{minipage}{0.45\textwidth}
		\begin{equation*}
			\begin{alignedat}{2}
				\tau (y_\tau^*)'' + (y_\tau^*)' - \laplace y_\tau^* =\ &u_\tau^* \quad &&\text{ in } (0,T) \times \Omega, \\
				y_\tau^* =\ &0 \quad &&\text{ on } (0,T) \times \partial\Omega,\\
				y_\tau^*(0,\cdot) =\ &y_0 \quad &&\text{ in } \Omega,\\
				(y_\tau^*)'(0,\cdot) =\ &y_1 \quad &&\text{ in } \Omega,
			\end{alignedat}
		\end{equation*}
	\end{minipage}%
	\begin{minipage}{0.55\textwidth}
		\begin{equation*}
			\begin{alignedat}{2}
				\tau (p_\tau^*)'' - (p_\tau^*)' - \laplace p_\tau^* =\ &y_\tau^* - y_d \quad &&\text{ in } (0,T) \times \Omega,\\
				p_\tau^* =\ &0 \quad &&\text{ on } (0,T) \times \partial \Omega,\\
				p_\tau^*(T,\cdot) =\ &0 \quad &&\text{ in } \Omega,\\
				-(p_\tau^*)'(T,\cdot) =\ &\nu \left( y_\tau^*(T) - y_d(T)  \right) \quad &&\text{ in } \Omega,
			\end{alignedat}
		\end{equation*}
	\end{minipage}
	\begin{equation*}
		\inner{p_\tau^* + \lambda u_\tau^*}{u-u_\tau^*}{L^2(0,T;L^2(\Omega))} \geq 0 \quad \text{for all } u \in U_\mathrm{ad}. 
		%\quad \text{ or }\quad p_\tau^* + \lambda u_\tau^* = 0 \quad \text{ in case } U_\mathrm{ad} = U. 
	\end{equation*}
\end{Theorem}
These conditions are already sufficient since our cost functional is convex. 

\begin{Remark}
	As the Cattaneo equation is a damped wave equation for a fixed delay parameter $\tau$, the investigations regarding well-posedness and optimal control (cf.~Theorems~\ref{thm:well_posedness_cattaneo}, \ref{thm:optimality_cattaneo}, and \ref{thm:fonc_cattaneo}) can be carried out very similarly to the analysis performed in \cite[Chapters~3 and~4]{Lions1971Optimal}.
\end{Remark}

\subsubsection{Optimal Control of the Heat Equation}
For later use we summarize the results concerning the optimal control of the heat equation. A detailed discussion can be found in, e.g., \cite{Hinze2009Optimization,Lions1971Optimal,Troeltzsch2010Optimal}. We consider problem \eqref{eq:ocp_heat} and use the same set of admissible controls as for the optimal control of the Cattaneo equation. The state space is given by $W(0,T)$ and due to the embedding $W(0,T) \embedding L^2(0,T;L^2(\Omega))$, the cost functional is well-defined. 
\begin{Theorem}
	\label{thm:optimality_heat}
	Let $\lambda\geq0$, $y_0\in H^3(\Omega) \cap H^1_0(\Omega)$, $y_d\in H^1(0,T;H^1_0(\Omega))$, and let Assumption~\ref{ass:regularity} hold. Then, problem \eqref{eq:ocp_heat} has a unique optimal control $u^*\in U_\mathrm{ad}$.
\end{Theorem}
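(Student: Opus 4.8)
The plan is to follow the standard reduction-and-direct-method argument for linear--quadratic problems, exactly as was done for Theorem~\ref{thm:optimality_cattaneo}, so that the statement reduces to an application of an abstract existence result such as \cite[Theorem~1.43]{Hinze2009Optimization}. First I would use Theorem~\ref{thm:well_posedness_heat} to define the control-to-state operator $S\colon U \to W(0,T)$, $u \mapsto y$, where $y$ is the unique weak solution of \eqref{eq:heat} for the fixed initial datum $y_0 \in H^1_0(\Omega)$. This map is affine (its linear part being the solution operator with $y_0 = 0$) and continuous by the energy estimate; composing it with the continuous embedding $W(0,T) \embedding L^2(0,T;L^2(\Omega))$ gives a continuous affine map into the observation space. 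Hence the reduced cost functional $\hat J(u) := J(S(u),u)$ is well-defined on $U_\mathrm{ad}$, and \eqref{eq:distributed_control_heat} is equivalent to minimizing $\hat J$ over $U_\mathrm{ad}$.

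For existence I would apply the direct method to $\hat J$. Let $(u_k) \subset U_\mathrm{ad}$ be a minimizing sequence. If $\lambda > 0$, the control term $\frac{\lambda}{2}\norm{u}{L^2(0,T;L^2(\Omega))}^2$ renders $\hat J$ coercive on $U$, so $(u_k)$ is bounded; if $\lambda = 0$, boundedness of $(u_k)$ is guaranteed directly by assumption \eqref{eq:assumption_u_ad}. In either case, after passing to a subsequence, $u_k \rightharpoonup u^*$ in $U$ for some $u^* \in U$, and $u^* \in U_\mathrm{ad}$ because $U_\mathrm{ad}$ is weakly closed by \eqref{eq:assumption_u_ad}. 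Since $S$ is affine and continuous and the maps $v \mapsto \tfrac12\norm{v - y_d}{L^2(0,T;L^2(\Omega))}^2$ and $u \mapsto \tfrac{\lambda}{2}\norm{u}{L^2(0,T;L^2(\Omega))}^2$ are convex and continuous, $\hat J$ is convex and continuous, hence weakly lower semicontinuous. Therefore $\hat J(u^*) \le \liminf_{k\to\infty} \hat J(u_k) = \inf_{U_\mathrm{ad}} \hat J$, so $u^*$ is optimal with associated optimal state $y^* = S(u^*) \in W(0,T)$.

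For uniqueness I would argue that $\hat J$ is strictly convex along segments in $U_\mathrm{ad}$. When $\lambda > 0$ this is immediate from strict convexity of the quadratic control term. When $\lambda = 0$, it suffices that the linear part of $S$ be injective: this holds for distributed control of the heat equation, since if two controls are mapped to the same state $y \in W(0,T)$, then \eqref{eq:heat} forces them both to agree with $y' - \laplace y$, hence with each other. Then $u \mapsto \tfrac12\norm{S(u) - y_d}{L^2(0,T;L^2(\Omega))}^2$ is strictly convex, and the existence of two distinct minimizers would contradict strict convexity on the connecting segment. Combining both cases yields the claimed uniqueness of $u^*$.

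The only point requiring some care is the case $\lambda = 0$: there coercivity of $\hat J$ is lost and must be substituted by the boundedness of $U_\mathrm{ad}$ assumed in \eqref{eq:assumption_u_ad}, and uniqueness rests on injectivity of the control-to-state map rather than on the cost functional alone. All the remaining ingredients --- affineness and continuity of $S$, weak sequential closedness of weakly closed convex sets, and weak lower semicontinuity of continuous convex functionals --- are classical, which is precisely why the result is simply quoted from \cite{Hinze2009Optimization} or \cite{Troeltzsch2010Optimal} in the literature.
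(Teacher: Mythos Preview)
Your argument is correct and is precisely the standard linear--quadratic existence-and-uniqueness proof that the paper defers to by citing \cite{Hinze2009Optimization} and \cite{Troeltzsch2010Optimal}; the paper gives no self-contained proof of this theorem. Your handling of the case $\lambda = 0$ (boundedness of $U_\mathrm{ad}$ for existence, injectivity of the control-to-state map for uniqueness) is exactly the right refinement of the abstract result.
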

\begin{Theorem}
	\label{thm:fonc_heat}
	Let $\lambda\geq0$, $y_0 \in H^3(\Omega) \cap H^1_0(\Omega)$, $y_d\in H^1(0,T;H^1_0(\Omega))$, and let Assumption~\ref{ass:regularity} hold. The first order optimality conditions for $u^*\in U_\mathrm{ad}$ being an optimal control of problem \eqref{eq:ocp_heat} with corresponding optimal state $y^*\in W(0,T)$ and optimal adjoint state $p^*\in W(0,T)$ are given by: \\
	
	\begin{minipage}{0.5\textwidth}
		\begin{equation*}
			\begin{alignedat}{2}
				(y^*)' - \laplace y^* &= u^* \quad &&\text{ in } (0,T) \times \Omega,\\
				y^* &= 0 \quad &&\text{ on } (0,T) \times \partial\Omega,\\
				y^*(0,\cdot) &= y_0 \quad &&\text{ in } \Omega,
			\end{alignedat}
		\end{equation*}
	\end{minipage}%
	\begin{minipage}{0.5\textwidth}
		\begin{equation*}
			\begin{alignedat}{2}
			-(p^*)' - \laplace p^* &= y^* - y_d \quad &&\text{ in } (0,T) \times \Omega,\\
			p^* &= 0 \quad &&\text{ on } (0,T) \times \partial\Omega,\\
			p^*(T,\cdot) &= 0 \quad &&\text{ in } \Omega,
			\end{alignedat}
		\end{equation*}
	\end{minipage}
	\begin{equation*}
		\inner{p^* + \lambda u^*}{u - u^*}{L^2(0,T;L^2(\Omega))} \geq 0 \quad \text{ for all } u \in U_\mathrm{ad}. %\quad \text{ or } \quad p^* + \lambda u^* = 0 \quad \text{ in case }U_\mathrm{ad} = U.
	\end{equation*}
\end{Theorem}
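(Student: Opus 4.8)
The plan is to reduce \eqref{eq:distributed_control_heat} to a problem in the control variable alone. First I would introduce the control-to-state operator $S\colon U \to W(0,T)$, $u \mapsto y$, where $y$ is the unique weak solution of \eqref{eq:heat} with source $u$ (well-defined by Theorem~\ref{thm:well_posedness_heat}). Since the initial datum $y_0$ is fixed, $S$ is affine linear; I write $Su = S_0 u + \bar y$, where $S_0 u \in W(0,T)$ solves \eqref{eq:heat} with zero initial datum and $\bar y$ solves it with $u\equiv 0$. Composing with $J$ yields the reduced functional
\begin{equation*}
	\hat J(u) := J(Su,u) = \frac{1}{2}\norm{Su - y_d}{L^2(0,T;L^2(\Omega))}^2 + \frac{\lambda}{2}\norm{u}{L^2(0,T;L^2(\Omega))}^2,
\end{equation*}
which is convex, continuous, and G\^ateaux differentiable on $U = L^2(0,T;L^2(\Omega))$.

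Next I would invoke the standard characterization of a minimizer of a convex differentiable functional over the closed convex set $U_{\mathrm{ad}}$: a point $u^*\in U_{\mathrm{ad}}$ is optimal if and only if $\hat J'(u^*)(u-u^*)\geq 0$ for all $u\in U_{\mathrm{ad}}$. By the chain rule, and using that $S$ is affine with linear part $S_0$,
\begin{equation*}
	\hat J'(u^*)(h) = \inner{Su^* - y_d}{S_0 h}{L^2(0,T;L^2(\Omega))} + \lambda \inner{u^*}{h}{L^2(0,T;L^2(\Omega))} \qquad \text{for all } h\in U.
\end{equation*}

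Then I would introduce the adjoint state $p^*\in W(0,T)$, defined as the unique weak solution of the backward problem $-(p^*)' - \laplace p^* = y^* - y_d$ in $(0,T)\times\Omega$, $p^* = 0$ on $(0,T)\times\partial\Omega$, $p^*(T,\cdot) = 0$, where $y^* := Su^*$. Its well-posedness, together with $p^*\in W(0,T)$, follows by applying Theorem~\ref{thm:well_posedness_heat} to the time-reversed equation (substitute $t \mapsto T-t$), since $y^* - y_d \in L^2(0,T;L^2(\Omega))$. The key identity is
\begin{equation*}
	\inner{y^* - y_d}{S_0 h}{L^2(0,T;L^2(\Omega))} = \inner{p^*}{h}{L^2(0,T;L^2(\Omega))} \qquad \text{for all } h\in U,
\end{equation*}
obtained by testing the weak form of the $p^*$-equation with $z := S_0 h \in L^2(0,T;H^1_0(\Omega))$, testing the weak form of the $z$-equation with $p^*$, using symmetry of $a$, and integrating by parts in time; the temporal boundary terms vanish because $p^*(T,\cdot) = 0$ and $z(0,\cdot) = 0$, and this manipulation is legitimate since both $p^*$ and $z$ lie in $W(0,T) \embedding C([0,T];L^2(\Omega))$.

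Combining the two displays gives $\hat J'(u^*)(u-u^*) = \inner{p^* + \lambda u^*}{u-u^*}{L^2(0,T;L^2(\Omega))}$, so the variational inequality is exactly the stated condition; since $\hat J$ is convex it is also sufficient. The main technical point is the integration-by-parts identity transferring the adjoint — one has to be explicit that weak solutions in $W(0,T)$ enjoy the temporal integration-by-parts formula and that the backward adjoint equation is well-posed with the claimed regularity; both are classical but form the crux of the argument.
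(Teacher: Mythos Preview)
Your proposal is correct and follows the standard adjoint-based derivation from \cite{Hinze2009Optimization,Troeltzsch2010Optimal}; the paper itself does not prove this theorem but simply cites those references, so your argument is exactly the one the paper defers to.
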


\begin{Remark}
	\label{rem:regularity}
	The results regarding the optimal control of the Cattaneo and heat equations are also true under less restrictive regularity assumptions on the data in the case $\nu = 0$. In this case, the same results hold for $y_0 \in H^1_0(\Omega)$, $y_1 \in L^2(\Omega)$, $y_d \in L^2(0,T;L^2(\Omega))$ and $U_\mathrm{ad}$ being a closed and convex subset of $L^2(0,T;L^2(\Omega))$ which is bounded in case $\lambda = 0$ (cf. \cite{Blauth2018Optimal}).
\end{Remark}

\section{Asymptotic Analysis}
\label{sec:asymptotic analysis}

As the estimate of Theorem~\ref{thm:well_posedness_cattaneo} is not sufficient to pass to the limit $\tau \to 0$ (cf.\ Remark~\ref{rem:estimate_not_sufficient}), we state stronger energy estimates in the following.
% testing the approximate solution with both its time derivative and itself, and using the Cauchy-Schwarz and Young's inequality in order to estimate the terms. 
\begin{Lemma}
	\label{lem:boundedness}
	For every $\tau>0$ the weak solution $y_\tau\in Y(0,T)$ of the Cattaneo equation \eqref{eq:cattaneo} from Theorem~\eqref{thm:well_posedness_cattaneo} satisfies
	%Additionally, there exists a constant $C>0$ independent of $\tau$ such that
	\begin{equation*}
		\norm{y_\tau}{L^2(0,T;H^1_0(\Omega))}^2 + \norm{y_\tau'}{L^2(0,T;L^2(\Omega))}^2 \leq C \left( \norm{u_\tau}{L^2(0,T;L^2(\Omega))}^2 + \norm{y_0}{H^1_0(\Omega)}^2 + \norm{y_1}{L^2(\Omega)}^2 \right)
	\end{equation*}
	for a constant $C>0$ independent of $\tau.$
\end{Lemma}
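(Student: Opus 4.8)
The plan is the classical energy method for the damped wave equation, the decisive point being that one tests with the time derivative rather than with the solution itself, so that the $\tau$-weighted terms enter with a favourable sign on the left. Formally, I would multiply the Cattaneo equation $\tau y_\tau'' + y_\tau' - \laplace y_\tau = u_\tau$ by $y_\tau'$, integrate over $\Omega$, and use the symmetry of $a$ to write $a[y_\tau,y_\tau'] = \tfrac12\frac{d}{dt} a[y_\tau,y_\tau]$ and $\tau\scalar{y_\tau''}{y_\tau'} = \tfrac{\tau}{2}\frac{d}{dt}\norm{y_\tau'}{L^2(\Omega)}^2$, obtaining
\begin{equation*}
	\frac{d}{dt}\left( \frac{\tau}{2}\norm{y_\tau'}{L^2(\Omega)}^2 + \frac12 a[y_\tau,y_\tau] \right) + \norm{y_\tau'}{L^2(\Omega)}^2 = \scalar{u_\tau}{y_\tau'} .
\end{equation*}
Integrating this over $(0,t)$, bounding $\scalar{u_\tau}{y_\tau'} \le \tfrac12\norm{u_\tau}{L^2(\Omega)}^2 + \tfrac12\norm{y_\tau'}{L^2(\Omega)}^2$ and absorbing the last term into the dissipation integral on the left, then discarding the non-negative term $\tfrac{\tau}{2}\norm{y_\tau'(t)}{L^2(\Omega)}^2$ and using $a[y_\tau,y_\tau] = \norm{\grad y_\tau}{L^2(\Omega)}^2$, I expect to reach
\begin{equation*}
	\norm{\grad y_\tau(t)}{L^2(\Omega)}^2 + \int_0^t \norm{y_\tau'}{L^2(\Omega)}^2 \dx{s} \le C\left( \norm{u_\tau}{L^2(0,T;L^2(\Omega))}^2 + \norm{\grad y_0}{L^2(\Omega)}^2 + \tau\norm{y_1}{L^2(\Omega)}^2 \right),
\end{equation*}
where $C$ does not depend on $\tau$. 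Since only the limit $\tau \to 0$ matters, I would simply restrict to $\tau \le 1$ (equivalently, let $C$ depend on a fixed upper bound for $\tau$), which removes the factor $\tau$ in front of $\norm{y_1}{L^2(\Omega)}^2$. Taking the supremum over $t\in[0,T]$, integrating the first term once more over $(0,T)$, and invoking Poincaré's inequality to pass between $\norm{\grad\cdot}{L^2(\Omega)}$ and $\norm{\cdot}{H^1_0(\Omega)}$ then gives the asserted bound with a constant depending only on $T$ and $\Omega$.

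The step that genuinely needs care — and which I expect to be the main obstacle — is justifying that $y_\tau'$ is admissible as a test function: a priori $y_\tau' \in L^2(0,T;L^2(\Omega))$ only, not in $L^2(0,T;H^1_0(\Omega))$, so \eqref{eq:weak_cattaneo} does not directly admit $v = y_\tau'$ and the term $a[y_\tau,y_\tau']$ is not a priori meaningful. I would circumvent this by carrying out the computation above at the level of the Faedo--Galerkin approximations $y_\tau^m$ used in the proof of Theorem~\ref{thm:well_posedness_cattaneo}, where $y_\tau^m(t)$ lies in the finite-dimensional span $V_m$ of the first $m$ eigenfunctions of $-\laplace$ on $H^1_0(\Omega)$. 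There $(y_\tau^m)'(t)\in V_m$ is a legitimate test function, the differential identity holds pointwise in $t$ for $y_\tau^m$, and the Galerkin initial data satisfy $\norm{\grad y_\tau^m(0)}{L^2(\Omega)} \le \norm{\grad y_0}{L^2(\Omega)}$ and $\norm{(y_\tau^m)'(0)}{L^2(\Omega)} \le \norm{y_1}{L^2(\Omega)}$ since they are orthogonal projections of $y_0$ and $y_1$ (orthogonal in $H^1_0(\Omega)$ and in $L^2(\Omega)$, respectively, by the choice of basis). This yields the estimate for $y_\tau^m$ with a constant $C$ independent of both $m$ and $\tau$.

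Finally, I would pass to the limit $m\to\infty$: the Galerkin scheme provides $y_\tau^m \rightharpoonup y_\tau$ in $L^2(0,T;H^1_0(\Omega))$ and $(y_\tau^m)' \rightharpoonup y_\tau'$ in $L^2(0,T;L^2(\Omega))$ along a subsequence, hence for the whole sequence by uniqueness of the weak solution, so weak lower semicontinuity of the norms transfers the uniform-in-$m$ bound to $y_\tau$ itself and produces exactly the claimed inequality. In summary, once the Galerkin bookkeeping is in place the estimate is the textbook energy estimate for the damped wave equation; the only conceptual subtlety beyond that is recognizing that it is the damping term $\norm{y_\tau'}{L^2(\Omega)}^2$ — and not a Grönwall argument applied to a $\tau^{-1}$-weighted quantity, which is what causes the blow-up of the constant in \eqref{eq:energy_estimate_1} — that keeps $C$ uniform as $\tau\to 0$.
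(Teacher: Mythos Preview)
Your argument is correct and in fact more economical than the paper's. Both you and the paper work at the Galerkin level and test the equation with $y_m'$ to obtain the identity
\[
\frac{d}{dt}\Bigl(\tfrac{\tau}{2}\norm{y_m'}{L^2(\Omega)}^2 + \tfrac12 a[y_m,y_m]\Bigr) + \norm{y_m'}{L^2(\Omega)}^2 = \scalar{u_\tau}{y_m'}.
\]
The difference is what each does with it. You integrate over $(0,t)$ and \emph{keep} the term $\tfrac12 a[y_m(t),y_m(t)]$ on the left, which immediately yields a pointwise-in-time bound $\norm{\grad y_m(t)}{L^2(\Omega)}^2 \le C(\cdot)$; one further integration in $t$ then gives the $L^2(0,T;H^1_0(\Omega))$ estimate directly. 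The paper instead integrates only over $[0,T]$ and discards $a[y_m(T),y_m(T)]$, so it recovers only the bound on $\norm{y_m'}{L^2(0,T;L^2(\Omega))}$; it then needs two auxiliary steps --- a Poincar\'e-in-time inequality (Lemma~\ref{lem:poincare_time}) to control $\norm{y_m}{L^2(0,T;L^2(\Omega))}$, and a second test with $v=y_m$ together with somewhat laborious estimates of the boundary terms $\tfrac{\tau}{2}\norm{y_m'(T)}{L^2(\Omega)}^2$ and $\tfrac{\tau}{2}\norm{y_m(T)}{L^2(\Omega)}^2$ --- to finally reach the $L^2(0,T;H^1_0(\Omega))$ bound. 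Your route avoids all of that, and even delivers the slightly stronger $L^\infty(0,T;H^1_0(\Omega))$ control for free. Both arguments share the harmless feature that the initial-data contribution is $\tau\norm{y_1}{L^2(\Omega)}^2$ rather than $\norm{y_1}{L^2(\Omega)}^2$; you make this explicit by restricting to bounded $\tau$, whereas the paper simply absorbs the factor $\tau$ into the generic constant without comment.
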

\begin{proof}
	The proof can be found in Appendix~\ref{proof:boundedness}.
\end{proof}

This result is already sufficient to prove the convergence of solutions of the Cattaneo equation to the ones of the heat equation in the limit $\tau \to 0$ as well as the convergence of the optimal controls and states, albeit without a convergence rate, but with minimal regularity assumptions. We investigate this in the following, where we only consider the case $\nu = 0$ since this allows us to use the minimal regularity assumptions (cf.~Theorem~\ref{thm:convergence_ocp_rateless} and Remark~\ref{rem:regularity}).

\begin{Theorem}
	\label{thm:convergence_rateless}
	Let $\{\tau_i\}\subset \mathbb{R^+}$ be a sequence with $\lim_{i\to \infty} \tau_i = 0$. Let $u\in L^2(0,T;L^2(\Omega))$, $y_0\in H^1_0(\Omega)$, $y_1\in L^2(\Omega)$ be given.
	Let $\{y_{\tau_i}\} \subset Y(0,T)$ be the sequence of unique weak solutions of the Cattaneo equation~\eqref{eq:cattaneo} corresponding to $\{\tau_i\}$
	and let $y\in W(0,T)$ be the unique weak solution of the heat equation~\eqref{eq:heat}.
	Then, it holds
	\begin{equation*}
		y_{\tau_i} \rightarrow y   \text{ in }L^2(0,T;L^2(\Omega)), \qquad
		y_{\tau_i} \rightharpoonup y  \text{ in }L^2(0,T;H^1_0(\Omega))\quad\text{ and }\quad
		y_{\tau_i}'\rightharpoonup y'  \text{ in }L^2(0,T;L^2(\Omega)).
	\end{equation*}
\end{Theorem}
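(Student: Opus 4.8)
The plan is to combine the $\tau$-uniform energy estimate of Lemma~\ref{lem:boundedness} with a standard weak-compactness argument and then identify the limit via the weak formulations. First I would fix a source term $u$ once and for all, and for each $\tau_i$ take $u_{\tau_i} = u$ (so that the right-hand side is the same for every $i$). Lemma~\ref{lem:boundedness} then gives
\begin{equation*}
	\norm{y_{\tau_i}}{L^2(0,T;H^1_0(\Omega))}^2 + \norm{y_{\tau_i}'}{L^2(0,T;L^2(\Omega))}^2 \leq C \left( \norm{u}{L^2(0,T;L^2(\Omega))}^2 + \norm{y_0}{H^1_0(\Omega)}^2 + \norm{y_1}{L^2(\Omega)}^2 \right)
\end{equation*}
with $C$ independent of $i$. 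Hence $\{y_{\tau_i}\}$ is bounded in $L^2(0,T;H^1_0(\Omega))$ and $\{y_{\tau_i}'\}$ is bounded in $L^2(0,T;L^2(\Omega))$, i.e.\ $\{y_{\tau_i}\}$ is bounded in $W(0,T)$. By reflexivity we may extract a subsequence (not relabeled) with $y_{\tau_i} \rightharpoonup \tilde y$ in $L^2(0,T;H^1_0(\Omega))$ and $y_{\tau_i}' \rightharpoonup \tilde y'$ in $L^2(0,T;L^2(\Omega))$ for some $\tilde y \in W(0,T)$ (the limit of the time derivatives is indeed $\tilde y'$ since distributional derivative is weakly continuous). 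The Aubin--Lions lemma, using $H^1_0(\Omega) \embedding\embedding L^2(\Omega) \embedding H^{-1}(\Omega)$, upgrades this to strong convergence $y_{\tau_i} \to \tilde y$ in $L^2(0,T;L^2(\Omega))$.

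Next I would identify $\tilde y$ as the weak solution $y$ of the heat equation. Take any $v \in L^2(0,T;H^1_0(\Omega))$ and pass to the limit in the weak Cattaneo formulation \eqref{eq:weak_cattaneo}:
\begin{equation*}
	\int\limits_{0}^{T} \tau_i \dual{y_{\tau_i}''}{v} + \scalar{y_{\tau_i}'}{v} + a[y_{\tau_i},v] \dx{t} = \int\limits_{0}^{T} \scalar{u}{v} \dx{t}.
\end{equation*}
The second and third terms converge to $\int_0^T \scalar{\tilde y'}{v} + a[\tilde y,v]\dx{t}$ by the weak convergences just established (recall $a$ is continuous, hence weakly continuous in its first argument). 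For the problematic first term, I would rewrite $\int_0^T \tau_i \dual{y_{\tau_i}''}{v}\dx{t}$ by integration by parts in time as $-\int_0^T \tau_i \scalar{y_{\tau_i}'}{v'}\dx{t} + \tau_i[\scalar{y_{\tau_i}'(T)}{v(T)} - \scalar{y_1}{v(0)}]$ for $v$ smooth in time with values in $H^1_0(\Omega)$ (a dense subclass); using the $\tau$-uniform bound on $y_{\tau_i}'$ in $L^2(0,T;L^2(\Omega))$, the volume term is $O(\tau_i) \to 0$, and a suitable trace/energy bound handles the endpoint terms, so the whole term vanishes. Thus $\tilde y$ satisfies \eqref{eq:weak_heat} for all such $v$, and by density for all $v \in L^2(0,T;H^1_0(\Omega))$. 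For the initial condition, since $y_{\tau_i} \rightharpoonup \tilde y$ in $W(0,T)$ and the trace map $W(0,T) \to L^2(\Omega)$, $w \mapsto w(0)$ is continuous (hence weakly continuous), $\tilde y(0) = \lim y_{\tau_i}(0) = y_0$. By uniqueness (Theorem~\ref{thm:well_posedness_heat}), $\tilde y = y$. A standard subsequence argument then shows the whole sequence converges, giving the three claimed convergences.

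The main obstacle I anticipate is controlling the inertial term $\int_0^T \tau_i \dual{y_{\tau_i}''}{v}\dx{t}$: naively it only contains $y_{\tau_i}''$, which by Theorem~\ref{thm:well_posedness_cattaneo} is bounded in $L^2(0,T;H^{-1}(\Omega))$ only with a $\tau$-dependent constant $C(\tau)$ that blows up, so one cannot simply say "$\tau_i \cdot$ bounded $\to 0$". The remedy is exactly the integration-by-parts trick above, which trades one time derivative onto the test function and converts the term into $\tau_i$ times something bounded uniformly (namely $y_{\tau_i}'$ in $L^2(0,T;L^2(\Omega))$ via Lemma~\ref{lem:boundedness}), at the cost of boundary-in-time terms that need the continuity of $y_{\tau_i}' \in C([0,T];H^{-1}(\Omega))$ together with a uniform bound; handling these endpoint contributions cleanly, and the density argument that lets one restrict to time-smooth test functions, is the only place requiring genuine care. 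Everything else is routine functional analysis.
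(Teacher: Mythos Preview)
Your approach is essentially the paper's: uniform bounds from Lemma~\ref{lem:boundedness}, weak compactness, integration by parts to kill the inertial term, identification of the limit as the heat solution, uniqueness to upgrade from subsequences, and Aubin--Lions for strong convergence in $L^2(0,T;L^2(\Omega))$. The one substantive difference is how the boundary-in-time terms are handled. The paper first tests with $v \in C^\infty_0((0,T);H^1_0(\Omega))$, so that after integrating $\tau_i \dual{y_{\tau_i}''}{v}$ by parts there are \emph{no} endpoint contributions at all; the resulting integral is $-\tau_i\int_0^T (y_{\tau_i}',v')\dx t$, and Lemma~\ref{lem:boundedness} alone makes this $O(\tau_i)$. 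Density then extends to all $v\in L^2(0,T;H^1_0(\Omega))$. Your version keeps general time-smooth $v$ and therefore has to control $\tau_i\langle y_{\tau_i}'(T),v(T)\rangle$; this can be done (the energy identity behind Lemma~\ref{lem:boundedness} actually yields $\tau\norm{y_\tau'(T)}{L^2(\Omega)}^2 \leq C$ uniformly, hence $\tau_i\norm{y_{\tau_i}'(T)}{L^2(\Omega)}=O(\sqrt{\tau_i})$), but that pointwise-in-time bound is not delivered by Lemma~\ref{lem:boundedness} as stated and would need to be extracted separately. Using compactly supported test functions is the cheaper route. Conversely, for the initial condition the paper re-tests with $v\in C^1([0,T];H^1_0(\Omega))$ satisfying $v(T)=0$, integrates by parts once more, and compares two identities; your argument via weak continuity of the trace $W(0,T)\to L^2(\Omega)$, $w\mapsto w(0)$, is a legitimate and somewhat slicker alternative.
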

\begin{proof}
	The proof can be found in Appendix~\ref{proof:convergence_rateless}.
\end{proof}

\begin{Theorem}
	\label{thm:convergence_ocp_rateless}
	Let $\nu=0$, $\lambda> 0$, $y_0\in H^1_0(\Omega)$, $y_1\in L^2(\Omega)$, $y_d\in L^2(0,T;L^2(\Omega))$, and let Assumption~\ref{ass:regularity} hold. Let $\{\tau_i\}\subset\mathbb{R^+}$ be a sequence with  $\lim_{i\to\infty}\tau_i = 0$. 
	
	For $i\in \mathbb{N}$ we denote by $u_{\tau_i}^*\in U_\mathrm{ad}$ the unique minimizer of problem \eqref{eq:ocp_cattaneo} and by $y_{\tau_i}^*$  its corresponding optimal state with weak time derivative $(y_{\tau_i}^*)'$. Further, let $u^*$ be the unique minimizer of problem \eqref{eq:ocp_heat} with corresponding optimal state $y^*$ and weak time derivative $(y^*)'$.
	
	Then, for the optimal states, it holds
	\begin{equation*}
		y_{\tau_i}^* \rightarrow y^* \text{ in } L^2(0,T;L^2(\Omega)),\qquad  y_{\tau_i}^* \rightharpoonup y^* \text{ in } L^2(0,T;H^1_0(\Omega)), \qquad (y_{\tau_i}^*)' \rightharpoonup (y^*)' \text{ in } L^2(0,T;L^2(\Omega))
	\end{equation*}
	and for the optimal controls, it holds
	\begin{equation*}
		u_{\tau_i}^*\rightarrow u^* \text{ in } L^2(0,T;L^2(\Omega)).
	\end{equation*}
\end{Theorem}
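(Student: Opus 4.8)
The plan is to combine the $\tau$-uniform a priori bound for the forward problem (Lemma~\ref{lem:boundedness}) with a standard lower-semicontinuity / $\Gamma$-convergence argument for the optimal control problems, exploiting strict convexity of the reduced cost functional when $\lambda > 0$. First I would set up the reduced cost functionals $j_\tau(u) := J(S_\tau u, u)$ and $j(u) := J(Su,u)$, where $S_\tau\colon U \to Y(0,T)$ and $S\colon U \to W(0,T)$ denote the (linear, affine in the initial data) solution operators of \eqref{eq:cattaneo} and \eqref{eq:heat}. The key structural input is that, by Lemma~\ref{lem:boundedness}, the family $\{S_\tau\}_{\tau}$ is uniformly bounded as operators into $L^2(0,T;H^1_0(\Omega)) \cap H^1(0,T;L^2(\Omega))$, and by Theorem~\ref{thm:convergence_rateless}, for every fixed $u\in U$ (with the fixed data $y_0,y_1$) one has $S_{\tau_i} u \to Su$ in $L^2(0,T;L^2(\Omega))$.

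The core of the argument is then a two-sided inequality for the optimal values. For the $\limsup$ (recovery) direction, I would plug the fixed competitor $u^*$ (the optimal control of the heat problem, which is admissible for every $\tau$ since $U_\mathrm{ad}$ is the same) into $j_{\tau_i}$: by optimality, $j_{\tau_i}(u_{\tau_i}^*) \le j_{\tau_i}(u^*) = \frac12\|S_{\tau_i} u^* - y_d\|^2 + \frac{\lambda}{2}\|u^*\|^2 \to \frac12\|S u^* - y_d\|^2 + \frac\lambda2\|u^*\|^2 = j(u^*)$, using the strong convergence $S_{\tau_i}u^* \to S u^*$ from Theorem~\ref{thm:convergence_rateless}. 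In particular $\{j_{\tau_i}(u_{\tau_i}^*)\}$ is bounded, hence (as $\lambda > 0$) the sequence $\{u_{\tau_i}^*\}$ is bounded in $U$; extract a weakly convergent subsequence $u_{\tau_i}^* \rightharpoonup \bar u$, and $\bar u \in U_\mathrm{ad}$ since $U_\mathrm{ad}$ is weakly closed. For the $\liminf$ direction I need $S_{\tau_i} u_{\tau_i}^* \rightharpoonup S\bar u$ (weakly in $L^2(0,T;L^2(\Omega))$, say), which I would obtain by combining the uniform bound of Lemma~\ref{lem:boundedness} (giving weak compactness of the states and their time derivatives along a further subsequence, with some limit $\bar y$) with passing to the limit in the variational formulation \eqref{eq:weak_cattaneo} — exactly as in the proof of Theorem~\ref{thm:convergence_rateless} — to identify $\bar y = S\bar u$; the only new point is that the right-hand side now also varies, but $u_{\tau_i}^* \rightharpoonup \bar u$ suffices to pass to the limit in the linear term $\int_0^T (u_{\tau_i}^*, v)\dx{t}$. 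Weak lower semicontinuity of norms then gives $j(\bar u) \le \liminf_i j_{\tau_i}(u_{\tau_i}^*) \le \limsup_i j_{\tau_i}(u_{\tau_i}^*) \le j(u^*)$, so by uniqueness of the minimizer of $j$ we get $\bar u = u^*$ and $j_{\tau_i}(u_{\tau_i}^*) \to j(u^*)$.

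It remains to upgrade the weak convergence $u_{\tau_i}^* \rightharpoonup u^*$ to strong convergence in $U$. Here I would use the standard trick for strictly convex functionals: from $\frac{\lambda}{2}\|u_{\tau_i}^*\|^2 = j_{\tau_i}(u_{\tau_i}^*) - \frac12\|S_{\tau_i}u_{\tau_i}^* - y_d\|^2$, together with $j_{\tau_i}(u_{\tau_i}^*) \to j(u^*)$ and the weak lower semicontinuity $\liminf_i \|S_{\tau_i}u_{\tau_i}^* - y_d\|^2 \ge \|Su^* - y_d\|^2$, I obtain $\limsup_i \|u_{\tau_i}^*\|^2 \le \|u^*\|^2$; combined with $u_{\tau_i}^* \rightharpoonup u^*$ and the fact that $U$ is a Hilbert space, this yields norm convergence, hence $u_{\tau_i}^* \to u^*$ strongly in $U$. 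The convergence of the optimal states then follows: the uniform bound of Lemma~\ref{lem:boundedness} forces $\{y_{\tau_i}^*\}$, $\{(y_{\tau_i}^*)'\}$ to be bounded, every subsequential weak limit is a solution of the heat equation with right-hand side $u^*$ (by the limit passage in \eqref{eq:weak_cattaneo}), which is unique and equal to $y^*$, giving $y_{\tau_i}^* \rightharpoonup y^*$ in $L^2(0,T;H^1_0(\Omega))$ and $(y_{\tau_i}^*)' \rightharpoonup (y^*)'$ in $L^2(0,T;L^2(\Omega))$ for the whole sequence; the strong convergence $y_{\tau_i}^* \to y^*$ in $L^2(0,T;L^2(\Omega))$ comes either from a compactness (Aubin--Lions) argument or, more directly, from the continuity estimate for the Cattaneo solution operator applied to $u_{\tau_i}^* \to u^*$ together with Theorem~\ref{thm:convergence_rateless} applied with right-hand side $u^*$. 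A standard subsequence-of-every-subsequence argument promotes all of these from subsequences to the full sequence. The main obstacle is the $\liminf$ step, i.e.\ making sure that the limit of the Cattaneo optimal states is correctly identified as $Su^*$ even though both the coefficient $\tau_i$ and the right-hand side $u_{\tau_i}^*$ are moving simultaneously; this is handled by the uniform estimate of Lemma~\ref{lem:boundedness}, which decouples the two limits by providing $\tau$-independent compactness.
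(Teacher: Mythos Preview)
Your proposal is correct and follows essentially the same route as the paper's proof: reduced cost functionals, the recovery inequality $j_{\tau_i}(u_{\tau_i}^*)\le j_{\tau_i}(u^*)\to j(u^*)$ for boundedness, weak compactness and passage to the limit in \eqref{eq:weak_cattaneo} to identify the state limit as $S\bar u$, weak lower semicontinuity plus uniqueness of the heat minimizer to conclude $\bar u=u^*$, and finally norm convergence of the controls via $\limsup\|u_{\tau_i}^*\|^2\le\|u^*\|^2$. The only cosmetic difference is that the paper first invokes Aubin--Lions to get strong convergence of the states and then uses that (rather than weak lower semicontinuity of the tracking term) to deduce $\limsup\|u_{\tau_i}^*\|^2\le\|u^*\|^2$; your variant is equally valid.
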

\begin{proof}
	The proof can be found in Appendix~\ref{proof:convergence_ocp_rateless}.
\end{proof}

\subsection{Compatibility Condition}
\label{ssec:intro_asymp}

The previous results show that we indeed have convergence of solutions of the Cattaneo equation to the ones of the heat equation for both the forward and the optimal control problem. In the following we additionally provide a convergence rate. For the stronger result, we require higher regularity assumptions as well as a compatibility condition, which we introduce in the following.

Under Assumption~\ref{ass:regularity}, we can differentiate the heat equation with respect to time to obtain
\begin{equation}
	\label{eq:derivative_heat}
	\begin{alignedat}{2}
		y'' - \laplace y' &= u' \quad &&\text{ in } (0,T) \times \Omega,\\
		y' &= 0 \quad &&\text{ on } (0,T) \times \partial\Omega,\\
		y'(0,\cdot) &= \laplace y_0 + \bar u \quad &&\text{ in } \Omega.
	\end{alignedat}
\end{equation}
Note that $y_0\in H^3(\Omega) \cap H^1_0(\Omega)$ yields $\laplace y_0 \in H^1(\Omega)$, and thanks to Assumption~\ref{ass:regularity} the initial condition of equation \eqref{eq:derivative_heat} is in $H^1_0(\Omega)$. Furthermore, since $u'\in L^2(0,T;H^1(\Omega)) \embedding L^2(0,T;L^2(\Omega))$ we can apply Theorem~\ref{thm:well_posedness_heat} and obtain a unique weak solution $y' \in W(0,T)$ of \eqref{eq:derivative_heat}. It is easy to see that $y'$ is indeed the weak time derivative of $y$ (cf.\ \cite{Evans2010Partial}). Finally, we observe that $y\in C([0,T];H^1_0(\Omega))$ and $y'\in C([0,T];L^2(\Omega))$. Thus, we can evaluate the heat equation for all $t\in [0,T]$ and, in particular, for $t=0$ we obtain
\begin{equation*}
	y'(0) - \laplace y(0) = y'(0) - \laplace y_0 = \bar u,
\end{equation*}
which coincides with the initial condition in \eqref{eq:derivative_heat}. This justifies to choose the second initial condition for the Cattaneo equation as 
\begin{equation}
	\label{eq:compatibility_condition}
	y_1 := \laplace y_0 + \bar u
\end{equation}
in order to be compatible with the heat equation.

\begin{Remark}
	\label{rem:compatibility_adjoint}
	The proof of the convergence rate for the optimal states and controls makes use of the convergence of the adjoint states $p_\tau$. Clearly, we cannot assume compatibility conditions for the adjoints, since the equations are determined by the cost functional and the state problem at hand. However, using $\nu = 1$ in the cost functional of \eqref{eq:ocp_cattaneo} naturally yields an analogous compatibility condition also for the adjoint Cattaneo equation, which is discussed later.
\end{Remark}

\subsection{Linear Convergence of the states}
\label{ssec:forward_asymptotic}
We begin with the proof of linear convergence of the states for given right-hand sides $u_\tau$ and $u.$
\begin{Theorem}
	\label{thm:linear_convergence_state}
	Let $y_0 \in H^3(\Omega) \cap H^1_0(\Omega)$, $u, u_\tau \in \hat{U}$ such that $y_1 = \laplace y_0 + \bar{u} \in H^1_0(\Omega)$. Let $y_\tau$ be the weak solution of \eqref{eq:cattaneo} and $y$ the weak solution of \eqref{eq:heat} corresponding to $u_\tau$ and $u$, respectively. Then, there exists a constant $C>0$, independent of $\tau$, such that
	\begin{equation*}
		\norm{y_\tau - y}{L^2(0,T;H^1_0(\Omega))} \leq C \norm{u_\tau - u}{L^2(0,T;L^2(\Omega))} + \tau C \left( \norm{u_\tau}{H^1(0,T;H^1(\Omega))} + \norm{y_0}{H^3(\Omega)} \right).
	\end{equation*}
\end{Theorem}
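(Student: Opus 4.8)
The plan is to estimate the difference $w_\tau := y_\tau - y$ by deriving a PDE for it and running an energy estimate that stays uniform in $\tau$. Subtracting the heat equation \eqref{eq:heat} from the Cattaneo equation \eqref{eq:cattaneo}, and using that under the compatibility condition $y_1 = \laplace y_0 + \bar u$ the time derivative $y'$ solves \eqref{eq:derivative_heat} with $y'(0) = y_1$, one finds that $w_\tau$ satisfies, at least formally,
\begin{equation*}
	\tau w_\tau'' + w_\tau' - \laplace w_\tau = (u_\tau - u) - \tau y'' \quad \text{in } (0,T)\times\Omega,
\end{equation*}
with homogeneous Dirichlet data and $w_\tau(0) = 0$, $w_\tau'(0) = 0$. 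The crucial observation is that the extra source term $-\tau y''$ is of order $\tau$, and that $y''$ can be controlled: since $y'$ solves the heat equation \eqref{eq:derivative_heat} with right-hand side $u' \in L^2(0,T;L^2(\Omega))$ and initial datum $\laplace y_0 + \bar u \in H^1_0(\Omega)$, Theorem~\ref{thm:well_posedness_heat} (applied to $y'$) together with elliptic regularity gives a bound on $\norm{y''}{L^2(0,T;L^2(\Omega))}$ (equivalently $\norm{\laplace y'}{L^2(0,T;L^2(\Omega))}$) in terms of $\norm{u_\tau}{H^1(0,T;H^1(\Omega))}$ and $\norm{y_0}{H^3(\Omega)}$, which is exactly the combination appearing on the right-hand side of the claimed estimate. (Here I use $H^2$-elliptic regularity; for a general Lipschitz domain one would instead keep the $H^{-1}$-type bound $\norm{y''}{L^2(0,T;H^{-1}(\Omega))}$ and pair $-\tau y''$ against $w_\tau$ in the $H^{-1}$--$H^1_0$ duality, which still works.)

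Next I would carry out the energy estimate. Testing the equation for $w_\tau$ with $w_\tau'$ and integrating over $(0,t)\times\Omega$ yields, after integration by parts in the elliptic term,
\begin{equation*}
	\frac{\tau}{2}\norm{w_\tau'(t)}{L^2(\Omega)}^2 + \int_0^t \norm{w_\tau'}{L^2(\Omega)}^2 \dx{s} + \frac12 a[w_\tau(t),w_\tau(t)] = \int_0^t \scalar{(u_\tau-u) - \tau y''}{w_\tau'} \dx{s},
\end{equation*}
using the vanishing initial conditions $w_\tau(0)=0$, $w_\tau'(0)=0$. The right-hand side is bounded by Cauchy--Schwarz and Young's inequality: $\int_0^t \scalar{u_\tau - u}{w_\tau'} \le \tfrac14\int_0^t\norm{w_\tau'}{L^2}^2 + \norm{u_\tau-u}{L^2(0,T;L^2(\Omega))}^2$, and likewise $\int_0^t \scalar{\tau y''}{w_\tau'} \le \tfrac14\int_0^t\norm{w_\tau'}{L^2}^2 + \tau^2\norm{y''}{L^2(0,T;L^2(\Omega))}^2$. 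Absorbing the $w_\tau'$ terms into the left-hand side, dropping the nonnegative $\tau$-term, and using coercivity of $a$ gives
\begin{equation*}
	\norm{w_\tau(t)}{H^1_0(\Omega)}^2 \leq C\Big( \norm{u_\tau - u}{L^2(0,T;L^2(\Omega))}^2 + \tau^2 \norm{y''}{L^2(0,T;L^2(\Omega))}^2 \Big)
\end{equation*}
for a constant independent of $\tau$. Integrating in $t$ over $(0,T)$, taking square roots (using $\sqrt{a+b}\le\sqrt a + \sqrt b$), and substituting the bound on $\norm{y''}{L^2(0,T;L^2(\Omega))}$ from the first paragraph produces the asserted inequality.

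The main obstacle is making the formal computation rigorous, on two fronts. First, $w_\tau''$ only lives in $L^2(0,T;H^{-1}(\Omega))$ while $w_\tau'$ only lives in $L^2(0,T;L^2(\Omega))$, so the pairing $\int_0^t \dual{w_\tau''}{w_\tau'}$ is not literally $\tfrac{d}{dt}\tfrac12\norm{w_\tau'}{L^2}^2$; this needs the standard regularization/density argument (Galerkin approximation, or mollification in time), exactly as in the proof of Lemma~\ref{lem:boundedness}, to justify the integration by parts. Second, one must verify that $w_\tau$ genuinely satisfies the stated PDE in the weak sense — i.e., that $\tau y''$ is an admissible forcing and that the initial conditions are attained — which relies on the regularity $y' \in W(0,T)$ established after \eqref{eq:derivative_heat} and on $y''$ being the weak time derivative of $y'$. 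Once these technical points are settled, the estimate itself is a routine Grönwall-free energy argument. I would relegate the density/regularization details to the appendix (as the paper does for Lemma~\ref{lem:boundedness}) and present the energy identity and the bound on $y''$ in the main text.
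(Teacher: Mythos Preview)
Your argument is correct, but it follows a genuinely different route from the paper's proof, and the comparison is instructive.

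The paper does \emph{not} write down a PDE for $w_\tau = y_\tau - y$ with forcing $-\tau y''$. Instead it (i) differentiates the \emph{Cattaneo} equation in time and applies Lemma~\ref{lem:boundedness} to the resulting problem (with zero second initial condition, thanks to the compatibility) to obtain a $\tau$-uniform bound on $\norm{y_\tau''}{L^2(0,T;L^2(\Omega))}$; then (ii) subtracts the two weak formulations and tests with $v = y_\tau - y$ itself. Coercivity of $a$ immediately produces $\norm{y_\tau - y}{L^2(0,T;H^1_0(\Omega))}^2$ on the left, and the only ``bad'' term is $\tau\int_0^T (y_\tau'', y_\tau - y)\,dt$, controlled by Cauchy--Schwarz and the bound from step (i).

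Your approach instead bounds $y''$ (the heat solution's second derivative) via Theorem~\ref{thm:well_posedness_heat} applied to \eqref{eq:derivative_heat} --- no elliptic regularity is actually needed here, since that theorem already places the time derivative of the solution in $L^2(0,T;L^2(\Omega))$ --- and then runs the damped-wave energy estimate on $w_\tau$ by testing with $w_\tau'$. This buys you two things: you avoid re-invoking Lemma~\ref{lem:boundedness} on the differentiated Cattaneo equation, and you actually get the stronger $L^\infty(0,T;H^1_0(\Omega))$ bound pointwise in time before integrating. The price is the technical justification of the pairing $\int_0^t \dual{w_\tau''}{w_\tau'}$, which you correctly flag; the paper's choice of test function $v = y_\tau - y \in L^2(0,T;H^1_0(\Omega))$ sidesteps this entirely, since that pairing is directly admissible in the weak formulation. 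One small slip: your bound on $y''$ naturally involves $\norm{u}{H^1(0,T;H^1(\Omega))}$, not $\norm{u_\tau}{H^1(0,T;H^1(\Omega))}$; the two are interchangeable up to $\norm{u_\tau - u}{}$ (or simply via the common bound $M$ from $\hat U$), so this does not affect the result.
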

\begin{proof}
	Thanks to the assumptions made in Section~\ref{ssec:intro_asymp}, we are allowed to differentiate the Cattaneo equation with respect to time and obtain
	\begin{equation}
		\label{eq:derivative_cattaneo}
		\begin{alignedat}{2}
			\tau y_\tau''' + y_\tau'' -\laplace y_\tau' &= u_\tau' \quad &&\text{ in } (0,T) \times \Omega,\\
			y_\tau' &= 0 \quad &&\text{ on } (0,T) \times \partial\Omega,\\
			y_\tau'(0,\cdot) &= y_1 \quad &&\text{ in } \Omega,\\
			y_\tau''(0,\cdot) &= \frac{1}{\tau} \left( \laplace y_0 + \bar u - y_1 \right) = 0 \quad &&\text{ in } \Omega,
		\end{alignedat}
	\end{equation}
	where we used the compatibility condition~\eqref{eq:compatibility_condition}. %$y_1 = \laplace y_0 + u_\tau(0)$. 
	We can apply Theorem~\ref{thm:well_posedness_cattaneo} since $y_1 \in H^1_0(\Omega)$ and $u_\tau' \in L^2(0,T;H^1(\Omega)) \embedding L^2(0,T;L^2(\Omega))$. This yields the existence and uniqueness of a weak solution $y_\tau' \in Y(0,T)$ of \eqref{eq:derivative_cattaneo}, which is also the weak time derivative of $y_\tau$. Moreover, Lemma~\ref{lem:boundedness} yields the existence of a constant $C>0$, independent of $\tau$, such that
	% HIER
	\begin{equation}
		\label{eq:energy_derivative}
		\norm{y_\tau'}{L^2(0,T;H^1_0(\Omega))}^2 + \norm{y_\tau''}{L^2(0,T;L^2(\Omega))}^2 %\leq\ &C\left( \norm{u_\tau'}{L^2(0,T;H^1(\Omega))}^2 + \norm{y_1}{H^1_0(\Omega)}^2 \right)\\
		\leq C\left( \norm{u_\tau}{H^1(0,T;H^1(\Omega))}^2 + \norm{y_0}{H^3(\Omega)}^2 \right),
	\end{equation}
	where we used the boundedness of the operator $\Delta \colon H^3(\Omega) \to H^1(\Omega)$ and the embedding $H^1(0,T;H^1(\Omega))\embedding C([0,T];H^1(\Omega))$. Additionally, $u_\tau$ is bounded in $H^1(0,T;H^1(\Omega))$ independently of $\tau$ due to the construction of $\hat{U}$. Therefore, combining Lemma~\ref{lem:boundedness} and \eqref{eq:energy_derivative} yields the following energy estimate
	\begin{equation}
		\label{eq:energy_estimate_complete}
		\norm{y_\tau}{L^2(0,T;H^1_0(\Omega))}^2 + \norm{y_\tau'}{L^2(0,T;H^1_0(\Omega))}^2 + \norm{y_\tau''}{L^2(0,T;L^2(\Omega))}^2 \leq C\left( \norm{u_\tau}{H^1(0,T;H^1(\Omega))}^2 + \norm{y_0}{H^3(\Omega)}^2 \right),
	\end{equation} 
	where $C>0$ is independent of $\tau$. We take the difference of the weak formulation of the Cattaneo equation \eqref{eq:weak_cattaneo} and the weak formulation of the heat equation \eqref{eq:weak_heat} to observe
	\begin{equation*}
		\int\limits_0^T \tau (y_\tau'',v) + (y_\tau' - y',v) + a[y_\tau - y, v] \dx{t} = \int\limits_0^T (u_\tau - u,v) \dx{t} \quad \text{for all }v\in L^2(0,T;H^1_0(\Omega)).
	\end{equation*}
	%Note that we have omitted the time dependency of the functions for better readability. 
	Choosing $v = y_\tau - y$ reveals
	\begin{equation*}
		\int\limits_0^T \tau (y_\tau'',y_\tau - y) + \frac{1}{2} \frac{d}{dt} \norm{y_\tau - y}{L^2(\Omega)}^2 + a[y_\tau - y, y_\tau - y] \dx{t} = \int\limits_0^T (u_\tau - u,y_\tau -  y) \dx{t}.
	\end{equation*}
	Using the coercivity of $a$ together with the Cauchy-Schwarz inequality and some rearrangements gives
	\begin{equation}
		\label{eq:norm_difference}
		\norm{y_\tau - y}{L^2(0,T;H^1_0(\Omega))}^2 %\leq\ &C\left( \norm{u_\tau - u}{L^2(0,T;L^2(\Omega))} + \tau \norm{y_\tau''}{L^2(0,T;L^2(\Omega))} \right)  \norm{y_\tau - y}{L^2(0,T;L^2(\Omega))} \\
		\leq C\left( \norm{u_\tau - u}{L^2(0,T;L^2(\Omega))} + \tau \norm{y_\tau''}{L^2(0,T;L^2(\Omega))} \right) \norm{y_\tau - y}{L^2(0,T;H^1_0(\Omega))},
	\end{equation}
	where we have used that $\norm{y_\tau(T) - y(T)}{L^2(\Omega)} \geq 0$, $y_\tau(0) = y_0 = y(0)$ and the embedding $L^2(0,T;H^1_0(\Omega)) \embedding L^2(0,T;L^2(\Omega))$. Dividing \eqref{eq:norm_difference} by $\norm{y_\tau - y}{L^2(0,T;H^1_0(\Omega))}$ and using the energy estimate \eqref{eq:energy_estimate_complete} results in
	\begin{equation*}
		\norm{y_\tau - y}{L^2(0,T;H^1_0(\Omega))} \leq C\left( \norm{u_\tau - u}{L^2(0,T;L^2(\Omega))} + \tau \left( \norm{u_\tau}{H^1(0,T;H^1(\Omega))} + \norm{y_0}{H^3(\Omega)} \right) \right),
	\end{equation*}
which completes the proof.
\end{proof}

The convergence of the states is an immediate consequence.
\begin{Corollary}
	\label{cor:linear_convergence_state}
	Let $y_0\in H^3(\Omega) \cap H^1_0(\Omega)$, $u_\tau, u \in \hat{U}$ such that $y_1 = \laplace y_0 +\bar u \in H^1_0(\Omega)$. Furthermore, let $u_\tau \to u$ in $H^1(0,T;H^1(\Omega))$ as $\tau \to 0$. Then, we have
	$$y_\tau \to y \;\text{ in } L^2(0,T;H^1_0(\Omega)) \; \text{ as }\tau \to 0.$$ 
	Moreover, if there exists some constant $\tilde{C} > 0$ independent of $\tau$ such that $\norm{u_\tau - u}{L^2(0,T;L^2(\Omega))} \leq \tau \tilde{C}$ for all sufficiently small $\tau$, then there exists $C>0$ independent of $\tau$ such that $\norm{y_\tau - y}{L^2(0,T;H^1_0(\Omega))} \leq C\tau$, i.e., we have linear convergence of the states.
\end{Corollary}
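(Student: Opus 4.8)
The plan is to read off both statements directly from the quantitative bound in Theorem~\ref{thm:linear_convergence_state}; the only preparatory step is to check that its hypotheses are in force. Since $u_\tau \to u$ in $H^1(0,T;H^1(\Omega))$, the family $\{u_\tau\}$ (together with $u$) is bounded in that space, say $\norm{u_\tau}{H^1(0,T;H^1(\Omega))} \leq M$ for all $\tau$; combined with the standing assumption $y_1 = \laplace y_0 + \bar u \in H^1_0(\Omega)$ and the normalisation $u_\tau(0) = u(0) = \bar u$, this puts $u_\tau, u \in \hat U$, so Theorem~\ref{thm:linear_convergence_state} is applicable. It provides a constant $C>0$, independent of $\tau$, with
\begin{equation*}
	\norm{y_\tau - y}{L^2(0,T;H^1_0(\Omega))} \leq C \norm{u_\tau - u}{L^2(0,T;L^2(\Omega))} + \tau C \left( \norm{u_\tau}{H^1(0,T;H^1(\Omega))} + \norm{y_0}{H^3(\Omega)} \right).
\end{equation*}

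For the first claim I would estimate the right-hand side termwise as $\tau \to 0$. The second summand is bounded by $\tau C(M + \norm{y_0}{H^3(\Omega)})$, which tends to $0$ because $M$ and $\norm{y_0}{H^3(\Omega)}$ are fixed. For the first summand, the continuous embedding $H^1(0,T;H^1(\Omega)) \embedding L^2(0,T;L^2(\Omega))$ upgrades the hypothesis $u_\tau \to u$ in $H^1(0,T;H^1(\Omega))$ to $u_\tau \to u$ in $L^2(0,T;L^2(\Omega))$, so $\norm{u_\tau - u}{L^2(0,T;L^2(\Omega))} \to 0$. Adding the two bounds yields $y_\tau \to y$ in $L^2(0,T;H^1_0(\Omega))$.

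For the second claim I would substitute the additional hypothesis $\norm{u_\tau - u}{L^2(0,T;L^2(\Omega))} \leq \tau \tilde C$ into the displayed inequality, which gives, for all sufficiently small $\tau$,
\begin{equation*}
	\norm{y_\tau - y}{L^2(0,T;H^1_0(\Omega))} \leq \tau \left( C\tilde C + C\left(M + \norm{y_0}{H^3(\Omega)}\right)\right),
\end{equation*}
and the bracketed quantity is a constant independent of $\tau$, which is exactly the claimed linear rate. I do not anticipate any genuine obstacle here: the corollary is essentially a repackaging of Theorem~\ref{thm:linear_convergence_state}, and the one point that deserves a sentence is the verification that $u_\tau, u \in \hat U$ — i.e.\ that the uniform $H^1(0,T;H^1(\Omega))$ bound (from the convergence) and the shared initial value $\bar u$ hold — so that the theorem may legitimately be invoked; everything else is termwise estimation together with the embedding $H^1(0,T;H^1(\Omega)) \embedding L^2(0,T;L^2(\Omega))$.
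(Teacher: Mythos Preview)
Your proposal is correct and follows exactly the approach of the paper, which simply states that the result follows directly from Theorem~\ref{thm:linear_convergence_state}. You have merely spelled out the details (the uniform $H^1(0,T;H^1(\Omega))$ bound coming from the convergence, the embedding into $L^2(0,T;L^2(\Omega))$, and the termwise estimation) that the paper leaves implicit.
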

\begin{proof}
	The proof follows directly from Theorem~\ref{thm:linear_convergence_state}.
\end{proof}

\subsection{Linear Convergence of the Optimal States and Optimal Controls}

In the following we only consider the case $\nu=1$ as this ensures the compatibility condition for the adjoint equation, as is discussed in the following. Under Assumption~\ref{ass:regularity}, we can differentiate the adjoint heat equation with respect to time to obtain
\begin{equation}
	\label{eq:adjoint_heat_derivative}
	\begin{alignedat}{2}
		-p'' - \laplace p' &= y' - y_d' \quad &&\text{ in } (0,T)\times\Omega,\\
		p' &= 0 \quad &&\text{ on } (0,T)\times\partial\Omega,\\
		-p'(T,\cdot) &= y(T,\cdot) - y_d(T,\cdot) \quad &&\text{ in }\Omega.
	\end{alignedat}
\end{equation}
By linearity, the adjoint equation has the same structure as the state equation. Moreover, we observe that $y'-y_d' \in L^2(0,T;H^1_0(\Omega))$ and $y(T)- y_d(T) \in H^1_0(\Omega)$. Hence, considering the time reversal $\Theta = T-t$ we can apply Theorem~\ref{thm:well_posedness_heat} to obtain the existence and uniqueness of a weak solution $p' \in W(0,T)$. Further, we can  evaluate the adjoint heat equation (cf.\ Theorem~\ref{thm:fonc_heat}) for every $t\in [0,T]$ and, in particular, for $t=T$ we obtain 
\begin{equation}
	\label{eq:adjoint_compatibility_condition}
	-p'(T) - \laplace p(T) = -p'(T) = y(T) - y_d(T),
\end{equation}
which, similarly to Section~\ref{ssec:forward_asymptotic}, corresponds to the terminal condition in \eqref{eq:adjoint_heat_derivative}. Therefore, by construction, the adjoint equation satisfies the compatibility condition \eqref{eq:adjoint_compatibility_condition} for $\nu=1$ (cf.~Theorem~\ref{thm:fonc_cattaneo}). 
\begin{Remark}
	As mentioned in Remark~\ref{rem:compatibility_adjoint}, the adjoint Cattaneo equation does not fulfill the adjoint compatibility condition naturally. In fact, when considering $\nu = 0$, we would obtain $p_\tau'(T,\cdot) = 0$ (cf.~Theorem~\ref{thm:fonc_cattaneo}). 
\end{Remark}

Now, we have everything at hand to prove the convergence of the optimal control and the optimal state.
\begin{Theorem}
	\label{thm:linear_convergence}
	Let $\nu=1$, $\lambda > 0$, $y_0 \in H^3(\Omega) \cap H^1_0(\Omega)$, $y_1 = \laplace y_0 + \bar{u} \in H^1_0(\Omega)$, $y_d \in H^1(0,T;H^1_0(\Omega))$ and let Assumption~\ref{ass:regularity} hold. We denote with $u_\tau^* \in U_\mathrm{ad}$ and $y_\tau^* \in Y(0,T)$ the optimal control and optimal state of problem \eqref{eq:ocp_cattaneo}, respectively, and with $u^* \in U_\mathrm{ad}$ and $y^* \in W(0,T)$ the optimal control and state of problem \eqref{eq:ocp_heat}, respectively. Then, there exists a constant $C= C(\lambda, y_0, y_d) > 0$, independent of $\tau$, such that
	\begin{equation*}
		\norm{u_\tau^* - u^*}{L^2(0,T;L^2(\Omega))} \leq \tau C(\lambda, y_0, y_d) \qquad \text{ and } \qquad \norm{y_\tau^* - y^*}{L^2(0,T;H^1_0(\Omega))} \leq \tau C(\lambda, y_0, y_d).
	\end{equation*}
	In particular, the optimal controls and the optimal states converge linearly as $\tau \to 0$.
\end{Theorem}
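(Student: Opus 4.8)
The plan is to combine the variational inequalities from Theorem~\ref{thm:fonc_heat} and Theorem~\ref{thm:kkt_cattaneo} in the standard way one proves convergence rates for optimal controls in linear-quadratic problems, and then to feed in the forward-convergence rate of Theorem~\ref{thm:linear_convergence_state} together with an analogous rate for the adjoints. Concretely, I would first test the variational inequality for $u^*$ with $u = u_\tau^* \in U_\mathrm{ad}$ and the variational inequality for $u_\tau^*$ with $u = u^* \in U_\mathrm{ad}$ (both are admissible, since the sets $U_\mathrm{ad}$ coincide), obtaining
\begin{equation*}
	\inner{p^* + \lambda u^*}{u_\tau^* - u^*}{L^2(0,T;L^2(\Omega))} \geq 0, \qquad \inner{p_\tau^* + \lambda u_\tau^*}{u^* - u_\tau^*}{L^2(0,T;L^2(\Omega))} \geq 0.
\end{equation*}
Adding these and rearranging yields the coercivity estimate
\begin{equation*}
	\lambda \norm{u_\tau^* - u^*}{L^2(0,T;L^2(\Omega))}^2 \leq \inner{p^* - p_\tau^*}{u_\tau^* - u^*}{L^2(0,T;L^2(\Omega))} \leq \norm{p^* - p_\tau^*}{L^2(0,T;L^2(\Omega))} \norm{u_\tau^* - u^*}{L^2(0,T;L^2(\Omega))},
\end{equation*}
so that $\lambda \norm{u_\tau^* - u^*}{L^2(0,T;L^2(\Omega))} \leq \norm{p^* - p_\tau^*}{L^2(0,T;L^2(\Omega))}$. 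Since $\lambda > 0$, everything then reduces to estimating $\norm{p_\tau^* - p^*}{}$ by $\tau C$, with a constant independent of $\tau$.

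Next I would bound the adjoint difference. Write $p_\tau^*$ for the solution of the adjoint Cattaneo equation in Theorem~\ref{thm:kkt_cattaneo} and $p^*$ for the adjoint heat equation in Theorem~\ref{thm:fonc_heat}; note the modified cost functional \eqref{eq:oc_cattaneo} was designed precisely so that $p_\tau^*$ satisfies the compatibility condition $-(p_\tau^*)'(T) = y_\tau^*(T) - y_d(T)$, mirroring \eqref{eq:adjoint_compatibility_condition}. The adjoint equations are, up to time reversal $\Theta = T-t$, of exactly the same form as the state equations, with right-hand side $y_\tau^* - y_d$ resp.\ $y^* - y_d$ and a (now nonzero, $\tau$-dependent but compatible) terminal drift. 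Hence I would apply the argument of Theorem~\ref{thm:linear_convergence_state} — really its time-reversed analogue — to the pair $(p_\tau^*, p^*)$, which gives
\begin{equation*}
	\norm{p_\tau^* - p^*}{L^2(0,T;H^1_0(\Omega))} \leq C \norm{y_\tau^* - y^*}{L^2(0,T;L^2(\Omega))} + \tau C\bigl( \norm{y_\tau^* - y_d}{H^1(0,T;H^1(\Omega))} + \text{data} \bigr).
\end{equation*}
The first term on the right is controlled by Theorem~\ref{thm:convergence_ocp_rateless} only qualitatively, so I instead estimate it via Theorem~\ref{thm:linear_convergence_state} applied to the state pair $(y_\tau^*, y^*)$: this gives $\norm{y_\tau^* - y^*}{L^2(0,T;H^1_0(\Omega))} \leq C \norm{u_\tau^* - u^*}{L^2(0,T;L^2(\Omega))} + \tau C$, where the $\tau C$ collects the data-dependent terms (here one uses that $u_\tau^* \in U_\mathrm{ad} \subset \hat U$, so $\norm{u_\tau^*}{H^1(0,T;H^1(\Omega))} \leq M$ uniformly, and $y_0 \in H^3$).

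Finally I would close the loop: substituting the state estimate into the adjoint estimate and that into the control estimate yields an inequality of the form
\begin{equation*}
	\lambda \norm{u_\tau^* - u^*}{L^2(0,T;L^2(\Omega))} \leq C \norm{u_\tau^* - u^*}{L^2(0,T;L^2(\Omega))} \cdot \text{(something)} + \tau C,
\end{equation*}
but with the wrong structure unless the chain is arranged carefully — the honest version is $\lambda \norm{u_\tau^* - u^*}{} \leq C\,\tau\,(1 + \text{bounded data}) + C\,\tau\,\norm{u_\tau^* - u^*}{}$ is \emph{not} what comes out; rather, because Theorem~\ref{thm:linear_convergence_state} produces a term linear (not quadratic) in $\norm{u_\tau^*-u^*}{}$ and the adjoint estimate then inherits it, one gets $\lambda \norm{u_\tau^* - u^*}{} \leq C \norm{u_\tau^* - u^*}{} + \tau C$. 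This is only useful if $\lambda > C$, which need not hold. The resolution — and the step I expect to be the main obstacle — is to be more careful: one must \emph{not} bound $\norm{y_\tau^* - y^*}{}$ through the control difference with a free constant, but rather track that the adjoint difference decomposes as $p_\tau^* - p^* = (\text{piece driven by } y_\tau^* - y^*) + (\text{piece of size } \tau)$, and the first piece, when paired back with $u_\tau^* - u^*$ in the variational inequality, combines with the term $\lambda\norm{u_\tau^*-u^*}{}^2$ to retain a \emph{definite sign} (this is the usual monotonicity/adjoint-state trick: $\inner{p_\tau^* - p^*}{y_\tau^* - y^*}{} \geq 0$ up to $\tau$-corrections, because the adjoint difference solves the adjoint equation with source $y_\tau^* - y^*$). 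Exploiting this, the genuinely problematic cross term cancels and one is left with $\lambda \norm{u_\tau^* - u^*}{L^2(0,T;L^2(\Omega))}^2 \leq \tau C(\lambda, y_0, y_d) \norm{u_\tau^* - u^*}{L^2(0,T;L^2(\Omega))} + \tau^2 C$, whence $\norm{u_\tau^* - u^*}{} \leq \tau C(\lambda, y_0, y_d)$ by absorbing; the state rate then follows from Theorem~\ref{thm:linear_convergence_state} (or Corollary~\ref{cor:linear_convergence_state}). I would present the argument by first isolating this monotonicity identity for the adjoint difference, then doing the absorption carefully.
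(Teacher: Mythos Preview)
Your proposal is correct and, after you rightly diagnose the failure of the naive Cauchy--Schwarz chain, lands on exactly the paper's approach: the ``monotonicity identity'' you allude to is obtained concretely by testing the state-difference equation with $p_\tau^* - p^*$ and the adjoint-difference equation with $y_\tau^* - y^*$ and subtracting, which (using $y_\tau^*(0)=y^*(0)$, $p_\tau^*(T)=p^*(T)=0$, and the variational inequality you already wrote down) yields $\norm{y_\tau^* - y^*}{L^2(0,T;L^2(\Omega))}^2 + \lambda \norm{u_\tau^* - u^*}{L^2(0,T;L^2(\Omega))}^2 \leq \tau \int_0^T \bigl[ ((p_\tau^*)'', y_\tau^* - y^*) - ((y_\tau^*)'', p_\tau^* - p^*) \bigr]\dx{t}$. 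The paper then bounds the right-hand side precisely as you outline---via Theorem~\ref{thm:linear_convergence_state} (and its time-reversed analogue for the adjoint) together with the $\tau$-uniform bounds on $(y_\tau^*)''$ and $(p_\tau^*)''$---to reach the same quadratic inequality $\lambda x^2 \leq C(\tau x + \tau^2)$ and conclude.
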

\begin{proof}	
	Using $p_\tau^* - p^*$ as test function in the weak formulation of the Cattaneo equation yields
	\begin{equation}
		\label{eq:tested_state}
		\tau \int_0^T \left((y_\tau^*)'', p_\tau^* - p^*\right) + \left( (y_\tau^* - y^*)', p_\tau^* - p^* \right) + a\left[ y_\tau^* - y^*, p_\tau^* - p^* \right] \dx{t} = \int_{0}^{T} \left( u_\tau^* - u^*, p_\tau^* - p^* \right) \dx{t},
	\end{equation}
	and, similarly, taking $y_\tau^* - y^*$ as test function for the adjoint Cattaneo equation gives
	\begin{equation}
		\label{eq:tested_adjoint}
		\tau \int_{0}^{T} \left( (p_\tau^*)'', y_\tau^* - y^* \right) - \left( (p_\tau^* - p^*)', y_\tau^* - y^* \right) + a\left[ p_\tau^* - p^*, y_\tau^* - y^* \right] \dx{t} = \int_{0}^{T} \left( y_\tau^* - y^*, y_\tau^* - y^* \right) \dx{t}.
	\end{equation}
	Subtracting \eqref{eq:tested_state} from \eqref{eq:tested_adjoint} results in
	\begin{equation}
		\label{eq:subtracted_equations}
		\begin{aligned}
			&\tau \int_{0}^{T} \left( (p_\tau^*)'', y_\tau^* - y^* \right) - \left( (y_\tau^*)'', p_\tau^* - p^* \right) - \frac{d}{dt} \left( y_\tau^* - y^*, p_\tau^* - p^* \right) \dx{t} \\
			=\ &\norm{y_\tau^* - y^*}{L^2(0,T;L^2(\Omega))}^2 - \inner{u_\tau^* - u^*}{p_\tau^* - p^*}{L^2(0,T;L^2(\Omega))},
		\end{aligned}
	\end{equation}
	due to the symmetry of the bilinear form $a$.
	
	Note that the set of admissible controls for problems \eqref{eq:ocp_cattaneo} and \eqref{eq:ocp_heat} coincide. In fact, we are allowed to use the optimality conditions from Theorems~\ref{thm:fonc_cattaneo} and~\ref{thm:fonc_heat} to obtain
	\begin{equation*}
		-\inner{p_\tau^* + \lambda u_\tau^*}{u_\tau^* - u^*}{L^2(0,T;L^2(\Omega))} \geq 0 \quad \text{ and } \quad \inner{p^* + \lambda u^*}{u_\tau^* - u^*}{L^2(0,T;L^2(\Omega))} \geq 0.
	\end{equation*}
	Adding these inequalities yields
	\begin{equation}
		\label{eq:estimate_adjoint_control}
		- \inner{p_\tau^* - p^*}{u_\tau^* - u^*}{L^2(0,T;L^2(\Omega))} \geq \lambda \norm{u_\tau^* - u^*}{L^2(0,T;L^2(\Omega))}^2.
	\end{equation}
	Now, using the above estimate \eqref{eq:estimate_adjoint_control} in \eqref{eq:subtracted_equations} yields the following inequality
	\begin{equation}
		\label{eq:pre_estimate}
		\begin{aligned}
			&\norm{y_\tau^* - y^*}{L^2(0,T;L^2(\Omega))}^2 + \lambda \norm{u_\tau^* - u^*}{L^2(0,T;L^2(\Omega))}^2 \\
			\leq\ &\tau \int_{0}^{T} \left( (p_\tau^*)'', y_\tau^* - y^* \right) - \left( (y_\tau^*)'', p_\tau^* - p^* \right) - \frac{d}{dt} \left( y_\tau^* - y^*, p_\tau^* - p^* \right) \dx{t}.
		\end{aligned}
	\end{equation}
	
	In the following, we estimate the terms on the right-hand side of \eqref{eq:pre_estimate}. The last term vanishes as
	\begin{equation*}
		\int_0^T \frac{d}{dt} \left( y_\tau^* - y^*, p_\tau^* - p^* \right) \dx{t} = \left( y_\tau^*(T) - y^*(T) \right) \left( p_\tau^*(T) - p^*(T) \right) - \left( y_\tau^*(0) - y^*(0) \right) \left( p_\tau^*(0) - p^*(0) \right) = 0,
	\end{equation*}
	since $y_\tau^*(0) = y^*(0) = y_0$ and $p_\tau^*(T) = p^*(T) = 0$.
	For the estimation of the first and second term, we have the following inequalities due to Theorem~\ref{thm:linear_convergence_state}
	\begin{equation}
		\label{eq:estimate_state}
		\begin{aligned}
			\norm{y_\tau^* - y^*}{L^2(0,T;H^1_0(\Omega))} &\leq C \norm{u_\tau^* - u^*}{L^2(0,T;L^2(\Omega))} + \tau C \left( M + \norm{y_0}{H^3(\Omega)} \right) \\
			&= C \norm{u_\tau^* - u^*}{L^2(0,T;L^2(\Omega))} + \tau C(y_0),
		\end{aligned}
	\end{equation}
	as well as
	\begin{equation*}
		\begin{aligned}
			\norm{p_\tau^* - p^*}{L^2(0,T;H^1_0(\Omega))} &\leq C \norm{y_\tau^* - y^*}{L^2(0,T;H^1_0(\Omega))} + \tau C \left( \norm{y_\tau^*}{H^1(0,T;H^1_0(\Omega))} + \norm{y_d}{H^1(0,T;H^1_0(\Omega))} \right) \\
			&\leq C \norm{u_\tau^* - u^*}{L^2(0,T;L^2(\Omega))} + \tau C \left( M + \norm{y_0}{H^3(\Omega)} + \norm{y_d}{H^1(0,T;H^1_0(\Omega))} \right) \\
			&= C \norm{u_\tau^* - u^*}{L^2(0,T;L^2(\Omega))} + \tau C(y_0, y_d),
		\end{aligned}
	\end{equation*}
	where we have used the time reversal $\theta = T - t$ for the adjoint equation, the embedding $L^2(0,T;H^1_0(\Omega)) \embedding L^2(0,T;L^2(\Omega))$, and the triangle inequality. Additionally, due to \eqref{eq:energy_estimate_complete} we have the estimates
	\begin{equation*}
		\norm{(y_\tau^*)''}{L^2(0,T;L^2(\Omega))} \leq C \left( \norm{u_\tau^*}{H^1(0,T;H^1(\Omega))} + \norm{y_0}{H^3(\Omega)} \right) \leq C \left( M + \norm{y_0}{H^3(\Omega)} \right) = C(y_0)
	\end{equation*}
	and
	\begin{equation*}
		\begin{aligned}
			\norm{(p_\tau^*)''}{L^2(0,T;L^2(\Omega))} &\leq C \left( \norm{y_\tau^*}{H^1(0,T;H^1_0(\Omega))} + \norm{y_d}{H^1(0,T;H^1_0(\Omega))} \right) \\
			&\leq C \left( M + \norm{y_0}{H^3(\Omega)} + \norm{y_d}{H^1(0,T;H^1_0(\Omega))} \right) = C(y_0, y_d).
		\end{aligned}
	\end{equation*}
	Therefore, we can estimate the first term of \eqref{eq:pre_estimate} as follows
	\begin{equation*}
		\begin{aligned}
			\abs{ \int_0^T \left( (p_\tau^*)'', y_\tau^* - y^* \right) \dx{t} } &\leq \norm{(p_\tau^*)''}{L^2(0,T;L^2(\Omega))} \norm{y_\tau^* - y^*}{L^2(0,T;H^1_0(\Omega))} \\
			&\leq C(y_0, y_d) \left( \norm{u_\tau^* - u^*}{L^2(0,T;L^2(\Omega))} + \tau C(y_0) \right).
		\end{aligned}
	\end{equation*}
	For the second term on the right-hand side of \eqref{eq:pre_estimate}, we can proceed analogously and obtain
	\begin{equation*}
		\begin{aligned}
			\abs{ \int_0^T \left( (y_\tau^*)'', p_\tau^* - p^* \right) \dx{t}} &\leq \norm{(y_\tau^*)''}{L^2(0,T;L^2(\Omega))} \norm{p_\tau^* - p^*}{L^2(0,T;H^1_0(\Omega))} \\
			&\leq C(y_0) \left( \norm{u_\tau^* - u^*}{L^2(0,T;L^2(\Omega))} + \tau C(y_0, y_d) \right).
		\end{aligned}
	\end{equation*}
	
	Using the previous estimates in \eqref{eq:pre_estimate}, we obtain the following inequality
	\begin{equation}
		\label{eq:linear_quadratic_estimate}
		\lambda \norm{u_\tau^* - u^*}{L^2(0,T;L^2(\Omega))}^2 \leq C(y_0, y_d) \left( \tau \norm{u_\tau^* - u^*}{L^2(0,T;L^2(\Omega))}  + \tau^2 \right).
	\end{equation}
	We observe, that \eqref{eq:linear_quadratic_estimate} is of the form $\lambda x^2 \leq C(y_0, y_d) \left( \tau x + \tau^2 \right)$ with $x= \norm{u_\tau^* - u^*}{L^2(0,T;L^2(\Omega))}$. We can use this to estimate
	\begin{equation*}
		\begin{aligned}
			\frac{\lambda x^2}{C(y_0, y_d)} \leq \tau x + \tau^2 \quad &\Rightarrow \quad \frac{\lambda x^2}{C(y_0, y_d)} \leq \left( x+\tau \right)^2 - x^2 \quad \Rightarrow  \quad \sqrt{ 1 + \frac{\lambda}{C(y_0, y_d)}} x \leq x+\tau \\
			&\Rightarrow \left( \sqrt{ 1 + \frac{\lambda}{C(y_0, y_d)}} - 1 \right) x \leq \tau,
		\end{aligned}
	\end{equation*}
	due to the non-negativity of $x$ and $\tau$. Since $\sqrt{ 1 + \frac{\lambda}{C(y_0, y_d)}} - 1 > 0$, there exists a constant $C(\lambda, y_0, y_d) > 0$ so that
	\begin{equation}
		\label{eq:convergence_controls}
		\norm{u_\tau^* - u^*}{L^2(0,T;L^2(\Omega))} \leq \tau C(\lambda, y_0, y_d),
	\end{equation}
	which gives the linear convergence of the optimal controls. Further, inserting \eqref{eq:convergence_controls} into \eqref{eq:estimate_state} yields the linear convergence of the optimal states, i.e., there exists some constant $C(\lambda, y_0, y_d) > 0$ so that
	\begin{equation*}
		\norm{y_\tau^* - y^*}{L^2(0,T;H^1_0(\Omega))} \leq \tau C(\lambda, y_0, y_d),
	\end{equation*}
	which completes the proof.
\end{proof}

\section{Numerical Results}
\newcommand{\scaleimgs}{1.0}
\newcommand{\tablegray}{gray!20}
\label{sec:numerical results}

We provide numerical results for the limit $\tau \to 0$ that underline our previous analysis. For the simulation of the Cattaneo equation we use the Newmark beta method (cf. \cite{Newmark1959Method}) with parameters $\beta = \gamma = \nicefrac{1}{2}$ as semi-discretization in time and for the simulation of the heat equation we use the Crank-Nicolson method for the time discretization. In both cases, the resulting sequence of PDEs is solved in Python with FEniCS \cite{Logg2012Automated,Alnes2015FEniCS}. We use the domain $\Omega = (0,1)^2$ and the time horizon $T=1.0$ for all simulations. 
%
%In order to find an appropriate time discretization we did a convergence analysis which revealed that the numerical solution showed no significant differences for a time step $\Delta t \approx \tau $. 
A convergence analysis revealed that an appropriate time discretization is given by $\Delta t \approx \tau $. Since the smallest $\tau$ we consider in this analysis is $\tau=1e^{-4}$, we choose a step size of $\Delta t = 1e^{-4}$ for all computations. For the spatial discretization we use a mesh consisting of $2601$ nodes in a uniform grid of $5000$ triangles and we use piecewise linear Lagrange finite elements. 
We denote the optimal controls and optimal states of the heat and Cattaneo equation with $u^*$ and $u_\tau^*$ as well as $y^*$ and $y_\tau^*$, respectively.

\begin{Remark}
	For the sake of brevity, we only consider the asymptotic behavior of the Cattaneo equation in the context of optimal control problems numerically. A detailed numerical investigation of the forward problem can be found in \cite{Blauth2018Optimal}.
\end{Remark}

For the numerical optimization with the Cattaneo and heat equation we only consider the case $\nu = 1$, as this is the main focus of the paper. For a discussion of the case $\nu = 0$ we refer the reader to \cite{Blauth2018Optimal}. %the second optimal control problem for the Cattaneo equation, i.e., we investigate the convergence of the optimal controls and states of \eqref{eq:oc_cattaneo} to the ones of \eqref{eq:distributed_control_heat}. 

We choose homogeneous initial conditions $y_0 = 0$ and $y_1 = \laplace y_0 + \bar u$ to satisfy the compatibility condition \eqref{eq:compatibility_condition}. In contrast to our theoretical investigation in Section~\ref{sec:asymptotic analysis} we choose the set of admissible controls $U_\mathrm{ad} = H^1(0,T;H^1(\Omega))$, i.e., we do not assume that $U_\mathrm{ad}$ is bounded for the numerical optimization. As desired state $y_d$ we choose the function
\begin{equation*}
	y_d(t,x) = \exp\left(-20 \left( \left( x_1 - \delta_{1}(t) \right)^2 + \left( x_2 - \delta_{2}(t) \right)^2 \right)\right) \qquad\text{with}\qquad 
	\begin{bmatrix}
		\delta_1(t) \\
		\delta_2(t)
	\end{bmatrix}
	= 
	\begin{bmatrix}
		\nicefrac{1}{2} + \nicefrac{1}{4} \cdot\cos(2\pi t) \\
		\nicefrac{1}{2} + \nicefrac{1}{4} \cdot\sin(2\pi t)
	\end{bmatrix},
\end{equation*}
which describes a Gaussian pulse centred at $[\delta_1(t), \delta_2(t)]^T$ that moves counterclockwise along a circle with midpoint $[\nicefrac{1}{2}, \nicefrac{1}{2}]^T$ and radius $\nicefrac{1}{4}$, starting at $[\nicefrac{3}{4},\nicefrac{1}{2}]^T$ for $t=0$. Note that the desired state does not have zero boundary values, in contrast to the assumption of the theoretical analysis. \\
%
%In order 
To solve this optimization problem numerically, we use the gradient descent method with exact line search since we have no control constraints for the model problem (cf. \cite{Hinze2009Optimization} or \cite{Schwedes2017Mesh}). The gradient descent method is initialized with $u^0= 0$ in $[0,T]\times \Omega$, the relative stopping tolerance is \num{1e-4}. \\

\begin{table}[!b]
	\centering
	\caption{Absolute and relative difference of the optimal controls and states for $\lambda = 1.0$.}
	\label{tab:convergence_table}
	\begin{subtable}{0.5\textwidth}
		\centering
		\caption{$\norm{u_\tau^* - u^*}{L^2(0,T;L^2(\Omega))}$}
		\rowcolors{2}{\tablegray}{white}
		\begin{tabular}{l l S l}
			\toprule
			$\tau$ & absolute & {relative [\%]} & order \\
			\midrule
			\num{1e-0} & \num{2.71e-2} & 288.1 & \\
			\num{1e-0.5} & \num{1.03e-2} & 109.4 & \num{0.84} \\
			\num{1e-1} & \num{3.07e-3} & 32.6 & \num{1.05} \\
			\num{1e-1.5} & \num{8.88e-4} & 9.44 & \num{1.08} \\
			\num{1e-2} & \num{2.61e-4} & 2.78 & \num{1.06} \\
			\num{1e-2.5} & \num{7.96e-5} & 0.85 & \num{1.03} \\
			\num{1e-3} & \num{2.48e-5} & 0.26 & \num{1.01} \\
			\num{1e-3.5} & \num{7.8e-6} & 0.083 & \num{1.0} \\
			\num{1e-4} & \num{2.46e-6} & 0.026 & \num{1.0} \\
			\bottomrule
		\end{tabular}
	\end{subtable}%
	\begin{subtable}{0.5\textwidth}
		\centering
		\caption{$\norm{y_\tau^* - y^*}{L^2(0,T;H^1_0(\Omega))}$}
		\rowcolors{2}{\tablegray}{white}
		\begin{tabular}{l l S l}
			\toprule
			$\tau$ & absolute & {relative [\%]} & order \\
			\midrule
			\num{1e-0} & \num{1.59e-2} & 789.5 & \\
			\num{1e-0.5} & \num{4.55e-3} & 226.43 & \num{1.08} \\
			\num{1e-1} & \num{9.76e-4} & 48.54 & \num{1.34} \\
			\num{1e-1.5} & \num{2.69e-4} & 13.39 & \num{1.12} \\
			\num{1e-2} & \num{7.90e-5} & 3.93 & \num{1.06} \\
			\num{1e-2.5} & \num{2.43e-5} & 1.21 & \num{1.02} \\
			\num{1e-3} & \num{7.60e-6} & 0.38 & \num{1.0} \\
			\num{1e-3.5} & \num{2.40e-6} & 0.12 & \num{1.0} \\
			\num{1e-4} & \num{7.58e-6} & 0.038 & \num{1.0} \\
			\bottomrule
		\end{tabular}
	\end{subtable}%
\end{table}

First, we investigate the convergence for $\lambda = 1.0$. The results are shown in Table~\ref{tab:convergence_table}, where we show the absolute and relative differences of the optimal controls and states as well as the numerical order of convergence. Additionally, these results are also visualized in Figures~\ref{fig:ocp_control_lambdas} and~\ref{fig:ocp_state_lambdas}, where the absolute differences are shown in a log-log plot. 

\begin{figure}[!b]
	\centering
	\includegraphics[scale=\scaleimgs]{./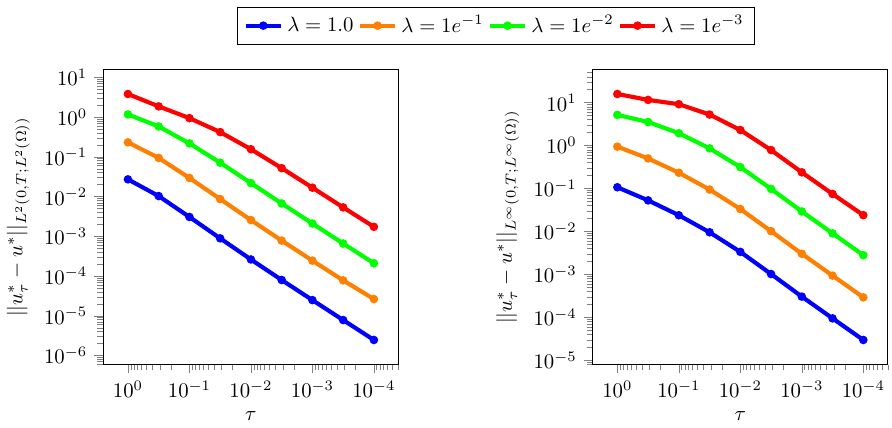}
	\caption{Convergence rates for the optimal controls of the Cattaneo equation.}
	\label{fig:ocp_control_lambdas}
\end{figure}

\begin{figure}[!b]
	\centering
	\includegraphics[scale=\scaleimgs]{./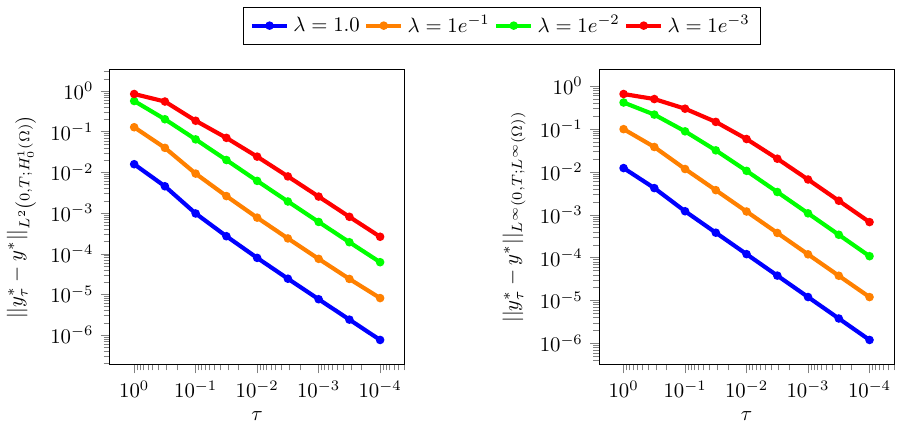}
	\caption{Convergence rates for the optimal states of the Cattaneo equation.}
	\label{fig:ocp_state_lambdas}
\end{figure}

The numerical results displayed there show that we indeed have convergence of both the optimal states and the optimal controls in all considered norms, which we have proved theoretically in Theorem~\ref{thm:linear_convergence} under slightly stronger assumptions. The numerical results confirm our theoretical findings: We have a linear convergence in the $L^2(0,T;H^1_0(\Omega))$ norm for the optimal state as well as a linear convergence in the $L^2(0,T;L^2(\Omega))$ norm for the optimal control. We observe that the convergence is weaker in the $L^\infty(0,T;L^\infty(\Omega))$ norm, particularly in the beginning. However, the results again suggest that we might have a linear convergence in this norm as $\tau \to 0$.

Let us now consider the optimal control problem in the limit $\lambda \to 0$. The convergence rates for different $\lambda$ are depicted in Figures~\ref{fig:ocp_control_lambdas} and Figure~\ref{fig:ocp_state_lambdas}. The results show that the numerical optimization behaves well for $\lambda \to 0$, as we still get a very similar qualitative convergence behaviour for all considered values of $\lambda$. Again, the numerical results confirm the theoretical analysis, since we observe that the convergence of the optimal state is linear in the $L^2(0,T;H^1_0(\Omega))$ norm and that the convergence of the optimal control is linear in the $L^2(0,T;L^2(\Omega))$ even for $\lambda \to 0$.

\section{Conclusion and Outlook}
\label{sec:conclusion}
We investigated the (forward) Cattaneo equation and the optimal control problem constrained by the Cattaneo equation. Our main focus was the asymptotic analysis of the limit $\tau\to 0$. For this, we recalled results concerning the well-posedness of the Cattaneo equation and heat equation for homogeneous Dirichlet boundary data.% which were chosen in order to simplify our notation and computations and briefly discussed the optimal control problem for the Cattaneo and the heat equation, where we stated the existence and uniqueness of an optimal control as well as the first order optimality conditions.

With these results, we examined the asymptotic analysis of both the forward problem and the optimal control problem for $\tau\to 0$. First, we summarized the results obtained in \cite{Blauth2018Optimal}, where the asymptotic behaviour was already investigated without proving a convergence rate. We improved these results under additional assumptions. Particularly, we proved the linear convergence of solutions of the Cattaneo equation to the ones of the heat equation. Moreover, we proved linear convergence of the solutions to the optimal control problem as $\tau \to 0$.
Finally, we provided numerical results for the optimal control problem constrained by the Cattaneo equation in the context of a vanishing delay time that confirmed the theoretical analysis.

Further theoretical and numerical analysis of the Cattaneo model in the context of radiative heat transfer is of interest for medical applications such as laser-induced thermotherapy or microwave ablation for the treatment of tumors. For such problems, the Cattaneo model for heat transfer has to be coupled with nonlinear equations describing radiative effects, which makes the analysis more challenging. 

%Further analysis and numerics involving the investigation of the Cattaneo model with non-linear terms, possibly coupled with other models such as radiative heat transfer is planned. %theoretically and numerically. Of practical interest would be the investigation of the Cattaneo model with non-linear terms, possibly coupled with other models such as, e.g., radiative heat transfer. 
%A first step in this direction is the coupling of the Cattaneo model with %Therefore, one could investigate the Cattaneo model coupled with 
%simplified $P_N$-approximations which are of practical interest in medical applications.

\section*{Acknowledgments}
Ren\'e Pinnau and Matthias Andres are grateful for the support of the German Federal Ministry of Education and Research (BMBF) grant no. 05M16UKE.

\appendix
\gdef\thesection{\Alph{section}}
\makeatletter
\renewcommand\@seccntformat[1]{\appendixname\ \csname the#1\endcsname.\hspace{0.5em}}
\makeatother

\section{Proofs of Section~\ref{sec:asymptotic analysis}}
\label{app:proofs}

In this section we provide the proofs of the results from Section~\ref{sec:asymptotic analysis}, which are based on the proofs given in \cite[Chapter~5]{Blauth2018Optimal}.

\subsection{Proof of Lemma~\ref{lem:boundedness}}
\label{proof:boundedness}

We use the Faedo-Galerkin method to obtain the energy estimates from Lemma~\ref{lem:boundedness}. In particular, we choose a countable set of linearly independent functions $w_k \in H^1_0(\Omega)$ so that the span of $\Set{w_k | k\in \mathbb{N}}$ is dense in $H^1_0(\Omega)$. Such a set exists since $H^1_0(\Omega)$ is separable. Let $m \in \mathbb{N}_{>0}$ and construct a function $y_m$ of the form
\begin{equation}
	\label{eq:faedo_galerkin}
	y_m(t) = \sum\limits_{k=1}^m \varphi_{mk}(t) w_k, \quad \varphi_{mk} \in H^2(0,T).
\end{equation}
In \cite{Blauth2018Optimal}, it is shown that there exists a unique $y_m$ of the form \eqref{eq:faedo_galerkin} which satisfies
\begin{equation}
	\label{eq:approximate_cattaneo}
	\begin{aligned}
		\left( \tau y_m'', w_k \right) + \left( y_m', w_k \right) + a[y_m, w_k] = \left( u, w_k \right), \\
		\varphi_{mk} = \beta_{mk}, \quad \varphi_{mk}' = \gamma_{mk},
	\end{aligned}
\end{equation}
for a.e. $t\in [0,T]$ and $k=1, \dots, m$, where the coefficients $\beta_{mk}$ and $\gamma_{mk}$ are chosen such that
\begin{equation*}
	\begin{aligned}
		y_m(0) = \sum\limits_{k=1}^m \beta_{mk} w_k \to y_0 \text{ in } H^1_0(\Omega) \qquad \text{ and } \qquad y_m'(0) = \sum\limits_{k=1}^m \gamma_{mk} w_k \to y_1 \text{ in } L^2(\Omega).
	\end{aligned}
\end{equation*}
For a detailed discussion we refer the reader to \cite[Chapter~3]{Blauth2018Optimal}.

\begin{Lemma}
	\label{lem:boundedness_y_prime}
	Let $y_m$ be the approximate solution of \eqref{eq:approximate_cattaneo} described above. Then, there exists a constant $C>0$ independent of $\tau$ such that we have
	\begin{equation*}
		\norm{y_m'}{L^2(0,T;L^2(\Omega))}^2 \leq C \left(  \norm{u}{L^2(0,T;L^2(\Omega))}^2 + \norm{y_0}{H^1_0(\Omega)}^2 +  \norm{y_1}{L^2(\Omega)}^2\right).
	\end{equation*}
\end{Lemma}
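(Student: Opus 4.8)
The plan is to derive the standard ``velocity'' energy identity for the Galerkin approximation $y_m$ and to exploit that, in contrast to the heat equation, the Cattaneo equation carries a damping term $y_m'$ which — upon testing with $y_m'$ — produces a dissipation term $\norm{y_m'}{L^2(\Omega)}^2$ with a favourable sign. This lets the estimate close \emph{without} a Grönwall argument, which is precisely the mechanism by which one avoids the degeneration of the constant as $\tau\to0$ observed in Theorem~\ref{thm:well_posedness_cattaneo}.

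Concretely, I would test \eqref{eq:approximate_cattaneo} with $v = y_m'(t)$ (admissible for a.e.\ $t$ since $\varphi_{mk}\in H^2(0,T)$), and use $\scalar{y_m''}{y_m'} = \tfrac12\tfrac{d}{dt}\norm{y_m'}{L^2(\Omega)}^2$, the symmetry of $a$ giving $a[y_m,y_m'] = \tfrac12\tfrac{d}{dt}a[y_m,y_m]$, and $\scalar{y_m'}{y_m'} = \norm{y_m'}{L^2(\Omega)}^2$, to obtain
\begin{equation*}
	\frac{\tau}{2}\frac{d}{dt}\norm{y_m'}{L^2(\Omega)}^2 + \norm{y_m'}{L^2(\Omega)}^2 + \frac{1}{2}\frac{d}{dt}a[y_m,y_m] = \scalar{u}{y_m'}.
\end{equation*}
Integrating over $(0,t)$, estimating $\int_0^t\scalar{u}{y_m'}\dx{s} \le \tfrac12\int_0^T\norm{u}{L^2(\Omega)}^2\dx{s} + \tfrac12\int_0^t\norm{y_m'}{L^2(\Omega)}^2\dx{s}$ by Young's inequality and absorbing the last term into the dissipation term, then dropping the non-negative quantities $\tfrac{\tau}{2}\norm{y_m'(t)}{L^2(\Omega)}^2$ and $\tfrac12 a[y_m(t),y_m(t)]$, and finally setting $t=T$, I arrive at
\begin{equation*}
	\int_0^T \norm{y_m'}{L^2(\Omega)}^2 \dx{t} \le \tau\norm{y_m'(0)}{L^2(\Omega)}^2 + \norm{\grad y_m(0)}{L^2(\Omega)}^2 + \int_0^T \norm{u}{L^2(\Omega)}^2\dx{t}.
\end{equation*}
Since $y_m(0)\to y_0$ in $H^1_0(\Omega)$ and $y_m'(0)\to y_1$ in $L^2(\Omega)$, these sequences of initial data are bounded, so $\norm{\grad y_m(0)}{L^2(\Omega)}\le C\norm{y_0}{H^1_0(\Omega)}$ and $\norm{y_m'(0)}{L^2(\Omega)}\le C\norm{y_1}{L^2(\Omega)}$ with $C$ independent of $m$ and $\tau$; for $\tau$ in any fixed bounded range — in particular for the small $\tau$ relevant to the asymptotic analysis — the factor $\tau$ in front of $\norm{y_m'(0)}{L^2(\Omega)}^2$ is harmless, and the claimed estimate follows.

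The argument is essentially routine; the only genuine decision is to test with $y_m'$ rather than with $y_m$, which is what makes the damping term available to absorb the forcing and keeps the constant uniform in $\tau$. The only point requiring a little care is the $\tfrac{\tau}{2}\norm{y_m'(0)}{L^2(\Omega)}^2$ contribution on the right-hand side, which forces one to restrict $\tau$ to a bounded interval (equivalently, to the regime $\tau\to0$), which is harmless for all subsequent uses of the lemma.
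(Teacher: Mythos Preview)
Your proof is correct and follows essentially the same approach as the paper: both test \eqref{eq:approximate_cattaneo} with $y_m'$, use the damping term $\norm{y_m'}{L^2(\Omega)}^2$ to absorb the forcing via Young's inequality, and drop the non-negative boundary contributions at $t=T$. The paper integrates directly over $[0,T]$ rather than over $(0,t)$ first, but this is an immaterial difference; your remark about restricting $\tau$ to a bounded range is likewise implicit in the paper's absorption of $\tfrac{\tau}{2}\norm{y_m'(0)}{L^2(\Omega)}^2$ into $C\norm{y_1}{L^2(\Omega)}^2$.
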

\begin{proof}
	We multiply \eqref{eq:approximate_cattaneo} with $y_m'$ and integrate this over $[0,T]$ to obtain
	\begin{equation*}
		\begin{aligned}
			\norm{y_m'}{L^2(0,T;L^2(\Omega))}^2 &\leq \int\limits_{0}^{T} \left( u(t), y_m'(t) \right) \dx{t} + \frac{\tau}{2} \norm{y_m'(0)}{L^2(\Omega)}^2 + \frac{1}{2} a[y_m(0),y_m(0)] \\
			&\leq \frac{1}{2} \norm{u}{L^2(0,T;L^2(\Omega))}^2 + \frac{1}{2} \norm{y_m'}{L^2(0,T;L^2(\Omega))}^2 + C \left( \norm{y_0}{H^1_0}^2 + \norm{y_1}{L^2}^2 \right),
		\end{aligned}
	\end{equation*}
	where we have used H\"older's and Young's inequalities. Subtracting $\nicefrac{1}{2} \norm{y_m'}{L^2(0,T;L^2(\Omega))}^2$ yields the desired estimate.
\end{proof}

\begin{Lemma}
	\label{lem:poincare_time}
	Let $y\in H^1(0,T;L^2(\Omega))$. Then, there exists a constant $C(T)>0$ only depending on $T$ such that 
	$$ \norm{y}{L^2(0,T;L^2(\Omega))}^2 \leq C(T) \left( \norm{y'}{L^2(0,T;L^2(\Omega))}^2 + \norm{y(0)}{L^2(\Omega)}^2 \right).
	$$
\end{Lemma}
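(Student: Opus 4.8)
The plan is to establish a Poincaré-type inequality in the time variable for Hilbert-space-valued functions. The key observation is that, for $y \in H^1(0,T;L^2(\Omega))$, we have the embedding $H^1(0,T;L^2(\Omega)) \embedding C([0,T];L^2(\Omega))$, so the pointwise evaluation $y(t)$ makes sense in $L^2(\Omega)$ for every $t$, and in particular the fundamental theorem of calculus holds in the Bochner sense: $y(t) = y(0) + \int_0^t y'(s) \dx{s}$ for all $t \in [0,T]$.

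First I would fix $t \in [0,T]$ and write $y(t) = y(0) + \int_0^t y'(s)\dx{s}$. Taking the $L^2(\Omega)$-norm, applying the triangle inequality, and then estimating the integral term with the Cauchy–Schwarz inequality in time gives
\begin{equation*}
	\norm{y(t)}{L^2(\Omega)} \leq \norm{y(0)}{L^2(\Omega)} + \int_0^t \norm{y'(s)}{L^2(\Omega)} \dx{s} \leq \norm{y(0)}{L^2(\Omega)} + \sqrt{T}\, \norm{y'}{L^2(0,T;L^2(\Omega))}.
\end{equation*}
Squaring and using $(a+b)^2 \leq 2a^2 + 2b^2$ yields a pointwise-in-$t$ bound on $\norm{y(t)}{L^2(\Omega)}^2$ by $2\norm{y(0)}{L^2(\Omega)}^2 + 2T \norm{y'}{L^2(0,T;L^2(\Omega))}^2$, which is already independent of $t$.

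Finally I would integrate this estimate over $t \in (0,T)$: since the right-hand side is constant in $t$, the integral just multiplies it by $T$, giving
\begin{equation*}
	\norm{y}{L^2(0,T;L^2(\Omega))}^2 = \int_0^T \norm{y(t)}{L^2(\Omega)}^2 \dx{t} \leq 2T \norm{y(0)}{L^2(\Omega)}^2 + 2T^2 \norm{y'}{L^2(0,T;L^2(\Omega))}^2,
\end{equation*}
so $C(T) := \max\{2T, 2T^2\}$ (or simply $2T + 2T^2$) works. There is no real obstacle here; the only point requiring a word of justification is the validity of the Bochner fundamental theorem of calculus, which follows from the standard fact that $H^1(0,T;L^2(\Omega))$ functions are (a.e. equal to) absolutely continuous $L^2(\Omega)$-valued functions — this can be cited from the preliminaries. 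One could alternatively prove the result first for smooth $y$ and pass to the limit by density of $C^1([0,T];L^2(\Omega))$ in $H^1(0,T;L^2(\Omega))$, but the direct argument above is cleaner.
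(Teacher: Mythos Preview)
Your proof is correct and follows essentially the same route as the paper: write $y(t)=y(0)+\int_0^t y'(s)\dx{s}$, estimate the integral by Cauchy--Schwarz in time, and integrate the resulting pointwise bound over $[0,T]$. The only cosmetic difference is that the paper first reduces to $y\in C^1([0,T];L^2(\Omega))$ by density and applies the triangle inequality at the end, whereas you invoke the Bochner fundamental theorem of calculus directly and split with $(a+b)^2\le 2a^2+2b^2$; both lead to the same estimate with a constant depending only on $T$.
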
 
\begin{proof}
	Note that $C^1([0,T];L^2(\Omega))$ is dense in $H^1(0,T;L^2(\Omega))$, so it suffices to prove the inequality for $y\in C^1([0,T];L^2(\Omega))$. For such a $y$ we observe
	\begin{equation*}
		\begin{aligned}
		\norm{y(t) - y(0) }{L^2(\Omega)}^2 = \norm{\int\limits_{0}^{t} y'(s) \dx{s}}{L^2(\Omega)}^2 \leq \left( \int\limits_{0}^{t} \norm{y'(s)}{L^2(\Omega)} \dx{s}\right)^2
		\leq t \int\limits_{0}^{t} \norm{y'(s)}{L^2(\Omega)}^2 \dx{s} \leq T \norm{y'}{L^2(0,T;L^2(\Omega))}^2,
		\end{aligned}
	\end{equation*}
	where we have used the Cauchy-Schwarz inequality in the second estimation. Integration over $[0,T]$ yields
	\begin{equation*}
		\norm{y-y(0)}{L^2(0,T;L^2(\Omega))}^2 = \int\limits_{0}^{T} \norm{y(t)-y(0)}{L^2(\Omega)}^2 \dx{t} \leq T \int\limits_{0}^{T} \norm{y'}{L^2(0,T;L^2(\Omega))}^2 \dx{t} = T^2 \norm{y'}{L^2(0,T;L^2(\Omega))}^2.
	\end{equation*}
	Finally, we employ the triangle inequality to find
	\begin{equation*}
		\norm{y}{L^2(0,T;L^2(\Omega))} \leq \norm{y - y(0)}{L^2(0,T;L^2(\Omega))} + \norm{y(0)}{L^2(0,T;L^2(\Omega))} \leq T \left( \norm{y'}{L^2(0,T;L^2(\Omega))} + \norm{y(0)}{L^2(\Omega)}^2 \right). \qedhere
	\end{equation*}
\end{proof}

We know that the approximate solution $y_m$ is in $H^2(0,T;H^1_0(\Omega)) \embedding H^1(0,T;L^2(\Omega))$ and, thus, we apply Lemma~\ref{lem:boundedness_y_prime} and Lemma~\ref{lem:poincare_time} to find
\begin{equation}
	\label{eq:estimate_y_l2}
	\begin{aligned}
		\norm{y_m}{L^2(0,T;L^2(\Omega))}^2 \leq\ &C(T)\left( \norm{y_m'}{L^2(0,T;L^2(\Omega))}^2 + \norm{y_m(0)}{L^2(\Omega)}^2 \right)\\
		\leq\ &C(T) \left( \norm{u}{L^2(0,T;L^2(\Omega))}^2 + \norm{y_0}{H^1_0(\Omega)}^2 +  \norm{y_1}{L^2(\Omega)}^2 \right),
	\end{aligned}
\end{equation}
with a constant $C(T) >0$ independent of $\tau$. 
\begin{Lemma}
	\label{lem:boundedness_y}
	There exists a constant $C>0$ independent of $\tau$ such that
	$$ \norm{y_m}{L^2(0,T;H^1_0(\Omega))}^2 \leq C \left( \norm{u}{L^2(0,T;L^2(\Omega))}^2 + \norm{y_0}{H^1_0(\Omega)}^2 + \norm{y_1}{L^2(\Omega)}^2 \right).
	$$
\end{Lemma}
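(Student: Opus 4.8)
The plan is to test the Galerkin system \eqref{eq:approximate_cattaneo} with $y_m$ itself — rather than with $y_m'$, as was done in the proof of Lemma~\ref{lem:boundedness_y_prime} — in order to obtain a bound on $\int_0^T a[y_m,y_m]\dx{t}$, and then to invoke the coercivity of $a$. Multiplying \eqref{eq:approximate_cattaneo} by $\varphi_{mk}$ and summing over $k=1,\dots,m$ gives, for a.e.\ $t\in[0,T]$,
\[
\tau\scalar{y_m''}{y_m} + \scalar{y_m'}{y_m} + a[y_m,y_m] = \scalar{u}{y_m}.
\]
Integrating over $[0,T]$ and integrating the first term by parts in time — which is legitimate since $y_m\in H^2(0,T;H^1_0(\Omega))\embedding C^1([0,T];H^1_0(\Omega))$, so that the point values $y_m(0),y_m(T),y_m'(0),y_m'(T)$ are well-defined — yields
\[
\int_0^T a[y_m,y_m]\dx{t} = \int_0^T\scalar{u}{y_m}\dx{t} + \tau\int_0^T\norm{y_m'}{L^2(\Omega)}^2\dx{t} - \tau\scalar{y_m'(T)}{y_m(T)} + \tau\scalar{y_m'(0)}{y_m(0)} - \tfrac12\norm{y_m(T)}{L^2(\Omega)}^2 + \tfrac12\norm{y_m(0)}{L^2(\Omega)}^2.
\]

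I would then estimate the right-hand side term by term, keeping in mind that — as elsewhere in Section~\ref{sec:asymptotic analysis} — we may restrict to $\tau\le 1$ since the object of interest is the limit $\tau\to 0$. Young's inequality combined with \eqref{eq:estimate_y_l2} bounds $\int_0^T\scalar{u}{y_m}\dx{t}$, and Lemma~\ref{lem:boundedness_y_prime} bounds $\tau\int_0^T\norm{y_m'}{L^2(\Omega)}^2\dx{t}$, each by $C\big(\norm{u}{L^2(0,T;L^2(\Omega))}^2 + \norm{y_0}{H^1_0(\Omega)}^2 + \norm{y_1}{L^2(\Omega)}^2\big)$ with $C$ independent of $\tau$; the term $-\tfrac12\norm{y_m(T)}{L^2(\Omega)}^2$ is nonpositive; and $\tfrac12\norm{y_m(0)}{L^2(\Omega)}^2 + \tau\scalar{y_m'(0)}{y_m(0)}$ is controlled by $C\big(\norm{y_0}{H^1_0(\Omega)}^2 + \norm{y_1}{L^2(\Omega)}^2\big)$ using the properties $\norm{y_m(0)}{H^1_0(\Omega)}\le C\norm{y_0}{H^1_0(\Omega)}$ and $\norm{y_m'(0)}{L^2(\Omega)}\le C\norm{y_1}{L^2(\Omega)}$ of the Galerkin initial data, together with Young's and Poincar\'e's inequalities.

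The remaining term $-\tau\scalar{y_m'(T)}{y_m(T)}$ is the one that needs care, and I expect it to be the main obstacle, since a priori we have no control of $y_m'(T)$ or $y_m(T)$. To handle it, I would return to the energy identity underlying Lemma~\ref{lem:boundedness_y_prime}: testing \eqref{eq:approximate_cattaneo} with $y_m'$, integrating over $[0,T]$ and \emph{not} discarding the terminal contributions (as was done there) gives, using the symmetry of $a$,
\[
\tfrac{\tau}{2}\norm{y_m'(T)}{L^2(\Omega)}^2 + \norm{y_m'}{L^2(0,T;L^2(\Omega))}^2 + \tfrac12 a[y_m(T),y_m(T)] = \int_0^T\scalar{u}{y_m'}\dx{t} + \tfrac{\tau}{2}\norm{y_m'(0)}{L^2(\Omega)}^2 + \tfrac12 a[y_m(0),y_m(0)].
\]
Applying Young's inequality to $\int_0^T\scalar{u}{y_m'}\dx{t}$ and absorbing the resulting $\norm{y_m'}{L^2(0,T;L^2(\Omega))}^2$-term on the left, the right-hand side is $\le C\big(\norm{u}{L^2(0,T;L^2(\Omega))}^2 + \norm{y_0}{H^1_0(\Omega)}^2 + \norm{y_1}{L^2(\Omega)}^2\big)$ with $C$ independent of $\tau$. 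Hence $\tfrac{\tau}{2}\norm{y_m'(T)}{L^2(\Omega)}^2$ and $\tfrac12 a[y_m(T),y_m(T)]$ — and therefore, by coercivity of $a$ and Poincar\'e's inequality, also $\norm{y_m(T)}{L^2(\Omega)}^2$ — obey that bound, so that $\big|\tau\scalar{y_m'(T)}{y_m(T)}\big|\le\tfrac{\tau}{2}\norm{y_m'(T)}{L^2(\Omega)}^2 + \tfrac{\tau}{2}\norm{y_m(T)}{L^2(\Omega)}^2 \le C\big(\norm{u}{L^2(0,T;L^2(\Omega))}^2 + \norm{y_0}{H^1_0(\Omega)}^2 + \norm{y_1}{L^2(\Omega)}^2\big)$ for $\tau\le 1$. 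Collecting all estimates bounds $\int_0^T a[y_m,y_m]\dx{t}$ by the claimed right-hand side, and coercivity of $a$ gives $\norm{y_m}{L^2(0,T;H^1_0(\Omega))}^2 \le C\int_0^T a[y_m,y_m]\dx{t}$, which completes the proof.
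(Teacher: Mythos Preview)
Your proof is correct and follows essentially the same strategy as the paper: test the Galerkin system with $y_m$, isolate $\int_0^T a[y_m,y_m]\dx{t}$, and control the terminal contributions by revisiting the energy identity obtained from testing with $y_m'$. The only cosmetic difference is that the paper bounds $\tfrac{\tau}{2}\norm{y_m(T)}{L^2(\Omega)}^2$ via a separate fundamental-theorem-of-calculus argument, whereas you read it off directly from the $a[y_m(T),y_m(T)]$ term of that same energy identity using coercivity --- a slightly cleaner variant of the same step.
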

\begin{proof}
	We multiply \eqref{eq:approximate_cattaneo} with $y_m$ and integrate this over $[0,T]$ to obtain
%	\begin{equation}
%		\label{eq:test_y_m}
%		\tau \left( y_m'', y_m \right) + \left( y_m', y_m \right) + a[y_m,y_m] = \left( u, y_m \right) \quad \text{ for a.e. } t\in [0,T].
%	\end{equation}
%	We observe that
%	\begin{equation*}
%	\tau \left( y_m'', y_m \right) = \tau \frac{d}{dt} \left( y_m', y_m\right) - \tau \norm{y_m'}{L^2(\Omega)}^2 \quad \text{ for a.e. } t\in [0,T].
%	\end{equation*}
%	Using this, we rewrite \eqref{eq:test_y_m} as
%	\begin{equation*}
%	\tau \frac{d}{dt} \left( y_m'(t), y_m(t)\right) + \frac{1}{2} \frac{d}{dt} \norm{y_m(t)}{L^2(\Omega)}^2 + a[y_m(t),y_m(t)] = \left( u(t), y_m(t) \right) + \tau \norm{y_m'(t)}{L^2(\Omega)}^2 \quad \text{ for a.e. } t\in [0,T].
%	\end{equation*}
%	Integrating this over $[0,T]$ yields
	\begin{equation*}
		\begin{aligned}
			\int\limits_{0}^{T} a[y_m(t),y_m(t)] \dx{t} &= \int\limits_{0}^{T} \left( u(t), y_m(t) \right) + \tau \norm{y_m'(t)}{L^2(\Omega)}^2\dx{t} - \tau \left( y_m'(T), y_m(T) \right)  \\
			&\quad + \tau \left( y_m'(0), y_m(0) \right) - \frac{1}{2} \norm{y_m(T)}{L^2(\Omega)}^2 + \frac{1}{2} \norm{y_m(0)}{L^2(\Omega)}^2.
		\end{aligned}
	\end{equation*}
	Using the coercivity of $a$ with the Cauchy-Schwarz and Young's inequality reveals
	\begin{equation}
		\label{eq:ineq_y}
		\begin{aligned}
			\norm{y_m}{L^2(0,T;H^1_0(\Omega))}^2 &\leq C \left( \norm{u}{L^2(0,T;L^2(\Omega))}^2 + \norm{y_m}{L^2(0,T;L^2(\Omega))}^2 + \norm{y_m'}{L^2(0,T;L^2(\Omega))}^2 + \frac{\tau}{2} \norm{y_m'(T)}{L^2(\Omega)}^2 \right. \\
			&\quad\qquad \left. + \frac{\tau}{2} \norm{y_m(T)}{L^2(\Omega)}^2 + \norm{y_1}{L^2(\Omega)}^2 + \norm{y_0}{H^1_0(\Omega)}^2 \right).
		\end{aligned}
	\end{equation}
	
	We estimate the terms on the right-hand side of \eqref{eq:ineq_y}. The second and third term can be estimated with \eqref{eq:estimate_y_l2} and Lemma~\ref{lem:boundedness_y_prime}. We estimate the term $\frac{\tau}{2}\norm{y_m'(T)}{L^2(\Omega)}^2$ as follows. We have
	\begin{equation}
		\label{eq:estimate_y_prime_at_zero}
		\frac{\tau}{2} \norm{y_m'(T)}{L^2(\Omega)}^2 = \frac{\tau}{2} \norm{y_m'(0)}{L^2(\Omega)}^2 + \int\limits_{0}^{T} \tau \left( y_m''(t), y_m'(t) \right) \dx{t}.
	\end{equation}
	Using the weak form of the Cattaneo equation \eqref{eq:weak_cattaneo} allows us to rewrite this as
	\begin{equation*}
		\frac{\tau}{2} \norm{y_m'(T)}{L^2(\Omega)}^2 = \frac{\tau}{2} \norm{y_m'(0)}{L^2(\Omega)}^2 + \int\limits_{0}^{T} \left( u(t), y_m'(t) \right) - \left( y_m'(t), y_m'(t)\right) - \frac{1}{2} \frac{d}{dt} a[y_m(t),y_m(t)] \dx{t}.
	\end{equation*}
	Using the coercivity of $a$ together with the Cauchy-Schwarz and Young's inequality as well as Lemma~\ref{lem:boundedness_y_prime} gives the estimate
	\begin{equation*}
		\frac{\tau}{2} \norm{y_m'(T)}{L^2(\Omega)}^2 \leq C\left( \norm{u}{L^2(0,T;L^2(\Omega))}^2 + \norm{y_0}{H^1_0(\Omega)}^2 + \norm{y_1}{L^2(\Omega)}^2 \right),
	\end{equation*}
	independently of $\tau$.
	
	Let us now investigate the term $\frac{\tau}{2}\norm{y_m(T)}{L^2(\Omega)}^2$. Similarly to before, we write this as 
	\begin{equation}
		\label{eq:help_y_at_T}
		\frac{\tau}{2} \norm{y_m(T)}{L^2(\Omega)}^2 = \frac{\tau}{2} \norm{y_m(0)}{L^2(\Omega)}^2 + \int\limits_{0}^{T} \tau \left( y_m'(t), y_m(t) \right) \dx{t}.
	\end{equation}
	Estimating the right-hand side of \eqref{eq:help_y_at_T} with the Cauchy-Schwarz and Young's inequality as well as Lemma~\ref{lem:boundedness_y_prime} and \eqref{eq:estimate_y_l2} gives
	\begin{equation*}
		\frac{\tau}{2} \norm{y_m(T)}{L^2(\Omega)}^2 \leq C\left( \norm{u}{L^2(0,T;L^2(\Omega))}^2 + \norm{y_0}{H^1_0(\Omega)}^2 + \norm{y_1}{L^2(\Omega)} \right),
	\end{equation*}
	also independently of $\tau$.
	
	Using all of the estimates above in \eqref{eq:ineq_y} we obtain the desired estimate
	$$ \norm{y_m}{L^2(0,T;H^1_0(\Omega))}^2 \leq C \left( \norm{u}{L^2(0,T;L^2(\Omega))}^2 + \norm{y_0}{H^1_0(\Omega)}^2 + \norm{y_1}{L^2(\Omega)}^2 \right). \qedhere
	$$
\end{proof}

\begin{proof}[Proof of Lemma~\ref{lem:boundedness}]
	The energy estimate follows directly from Lemma~\ref{lem:boundedness_y_prime} and Lemma~\ref{lem:boundedness_y}, by using the density of the Faedo-Galerkin basis functions $w_k$ in $H^1_0(\Omega)$ and $L^2(\Omega)$, respectively.
\end{proof}

\subsection{Proof of Theorem~\ref{thm:convergence_rateless}}
\label{proof:convergence_rateless}

\begin{proof}[Proof of Theorem~\ref{thm:convergence_rateless}]
	We denote by $y_i$ the unique weak solution of the Cattaneo equation with $\tau = \tau_i$. Thanks to Lemma~\ref{lem:boundedness} we know that the sequences $(y_i)$ and $(y_i')$ are bounded independently of $\tau$ so that we can extract weakly convergent subsequences $(y_{i_j}) \subset (y_i)$ and $(y_{i_j}') \subset (y_i')$ with
	\begin{equation*}
		y_{i_j} \rightharpoonup y \quad \text{ in } L^2(0,T;H^1_0(\Omega)) \quad\qquad \text{ and } \quad\qquad y_{i_j}' \rightharpoonup y' \quad \text{ in } L^2(0,T;L^2(\Omega))
	\end{equation*}
	for some $y\in L^2(0,T;H^1_0(\Omega))$ with weak time derivative $y'\in L^2(0,T;L^2(\Omega))$ as $j\to\infty$. In the following, we show that $y$ is a weak solution of the heat equation. Therefore, we assume that the test function $v$ lies in $C^\infty_0((0,T);H^1_0(\Omega))$. For such a $v$, we apply integration by parts in \eqref{eq:weak_cattaneo} and observe
	\begin{equation}
		\label{eq:limit_cattaneo}
		\int\limits_{0}^{T} -\tau_{i_j} \left( y_{i_j}'(t), v'(t) \right) + \left( y_{i_j}'(t), v(t) \right) + a[y_{i_j}(t),v(t)] \dx{t} = \int\limits_{0}^{T} \left( u(t), v(t) \right) \dx{t}.
	\end{equation}
	From Lemma~\ref{lem:boundedness} we get that
%	\begin{equation*}
%	\norm{\tau_{i_j} y_{i_j}'}{L^2(0,T;L^2(\Omega))}^2 = \tau_{i_j}^2 \norm{y_{i_j}'}{L^2(0,T;L^2(\Omega))}^2 \leq \tau_{i_j}^2 C\left( \norm{u}{L^2(0,T;L^2(\Omega))}^2 + \norm{y_0}{H^1_0(\Omega)}^2 + \norm{y_1}{L^2(\Omega)}^2 \right),
%	\end{equation*}
%	where $C>0$ is independent of $\tau$. Due to this, we have that
	\begin{equation*}
	\tau_{i_j} \norm{y_{i_j}'}{L^2(0,T;L^2(\Omega))} = \norm{\tau_{i_j} y_{i_j}'}{L^2(0,T;L^2(\Omega))} \to 0
	\end{equation*}
	as $j\to \infty$ since $y_i'$ is bounded independently of $\tau$. Therefore, passing to the limit $j\to\infty$ in \eqref{eq:limit_cattaneo} yields
	\begin{equation}
	\label{eq:pass_to_limit}
	\int\limits_{0}^{T} \left( y'(t), v(t) \right) + a[y(t),v(t)] \dx{t} = \int\limits_{0}^{T} \left( u(t), v(t)\right) \dx{t},
	\end{equation}
	due to the weak convergence of $y_{i_j}$ and $y_{i_j}'$. Note that this also holds true for all $v\in L^2(0,T;H^1_0(\Omega))$ since $C^\infty_0((0,T);H^1_0(\Omega))$ is dense in this space. 

	Let us now choose $v\in C^1([0,T];H^1_0(\Omega))$ with $v(T) = 0$. Applying integration by parts in \eqref{eq:pass_to_limit} gives
	\begin{equation}
		\label{eq:initial_condition_1}
		\int\limits_{0}^{T} - \left( y(t), v'(t) \right) + a[y(t),v(t)] \dx{t} = \int\limits_{0}^{T} \left( u(t), v(t)\right) \dx{t} + \left( y(0), v(0)\right).
	\end{equation}
	On the other hand, we also apply integration by parts to \eqref{eq:weak_cattaneo} and observe
	\begin{equation}
		\label{eq:initial_condition_2}
		\begin{aligned}
			&\int\limits_{0}^{T} -\tau_{i_j} \left( y'_{i_j}(t), v'(t) \right) - \left( y_{i_j}(t), v'(t) \right) + a[y_{i_j}(t),v(t)] \dx{t} \\
			= & \int\limits_{0}^{T} \left( u(t), v(t) \right) \dx{t} + \tau_{i_j} \left( y_1, v(0) \right) + \left(y_0, v(0) \right),
		\end{aligned}
	\end{equation}
	since we have $y_{i_j}(0,\cdot) = y_0$ for all $j$. Analogously to before, we take the limit $j\to \infty$ in \eqref{eq:initial_condition_2} and observe
	\begin{equation}
	\label{eq:initial_condition_2_limit}
		\int\limits_{0}^{T} - \left( y(t), v'(t)\right) + a[y(t),v(t)] \dx{t} = \int\limits_{0}^{T} \left( u(t), v(t)\right) \dx{t} + \left( y_0, v(0)\right).
	\end{equation}
	Comparing equations \eqref{eq:initial_condition_1} and \eqref{eq:initial_condition_2_limit} we obtain
	\begin{equation*}
	\left( y_0, v(0)\right) = \left( y(0), v(0)\right),
	\end{equation*}
	and, therefore, directly $ y(0) = y_0$ as $v(0)$ is arbitrary.
	
	As the heat equation \eqref{eq:weak_heat} has a unique solution and due to the embedding $Y(0,T) \embedding W(0,T)$, we observe that the weak limit $y$ is indeed the weak solution of the heat equation. Due to the uniqueness of the limit, we even get the convergence of the entire sequence, so that
	\begin{equation*}
		y_i \rightharpoonup y \quad \text{ in } L^2(0,T;H^1_0(\Omega)) \quad\qquad \text{ and } \quad \qquad y_i' \rightharpoonup y' \quad \text{ in } L^2(0,T;L^2(\Omega)).
	\end{equation*}
	
	We even get the strong convergence thanks to the Aubin-Lions Lemma (see, e.g., \cite[Lemma~3.74]{Ruzicka2020Nichtlineare}). We apply this with $X_0 = H^1_0(\Omega)$, $X=L^2(\Omega)$, $X_1 = L^2(\Omega)$ and $p = q = 2$. Hence, we have 
	\begin{equation*}
		W_0 := \Set{y\in L^2(0,T;H^1_0(\Omega)) | y'\in L^2(0,T;L^2(\Omega))} = W(0,T)
	\end{equation*}
	and we observe that the sequence $(y_i)$ is in $W_0$. Moreover, the embedding $H^1_0(\Omega) \embedding\embedding L^2(\Omega)$ is compact (see, e.g., \cite{Evans2010Partial}) and the embedding $L^2 \embedding L^2$ is continuous. Hence, the Aubin-Lions Lemma gives the compact embedding $W_0\embedding\embedding L^2(0,T;L^2(\Omega))$. Due to the boundedness of $(y_i)$ we get a subsequence $(y_{i_l})\subset (y_i)$ with
	\begin{equation*}
		y_{i_l} \to y \text{ in } L^2(0,T;L^2(\Omega)) \text{ as } l\to\infty,
	\end{equation*}
	due to the uniqueness of the weak limit. As before, the uniqueness of the limit also implies the convergence of the entire sequence and, hence, it holds that
	\begin{equation*}
		y_i \to y \text{ in } L^2(0,T;L^2(\Omega)) \text{ as } i\to \infty,
	\end{equation*}
	which completes the proof.
\end{proof}

\subsection{Proof of Theorem~\ref{thm:convergence_ocp_rateless}}
\label{proof:convergence_ocp_rateless}

\begin{proof}[Proof of Theorem~\ref{thm:convergence_ocp_rateless}]
	We denote by $G_{c_i}$ the solution operator of the Cattaneo equation with $\tau = \tau_i$, i.e.,
	$$G_{c_i}\colon L^2(0,T;L^2(\Omega)) \to Y(0,T);\quad u\mapsto G_{c_i}(u) = y_i,
	$$
	where $y_i$ is the unique weak solution of \eqref{eq:weak_cattaneo} with $\tau = \tau_i$. We consider the reduced problem
	\begin{equation*}
	\min_{u\in U_\mathrm{ad}} \hat{J}_i(u) = J(G_{c_i}(u), u),
	\end{equation*}
	which has a unique minimizer $u_i^* \in U_\mathrm{ad}$ for every $i\in\mathbb{N}$ due to Theorem~\ref{thm:well_posedness_cattaneo}, and we denote by $y_i^*$ its corresponding optimal state, i.e., $y_i^* := G_{c_i}(u_i^*)$.

	For the heat equation we have the solution operator 
	\begin{equation*}
		G\colon U \to W(0,T); \quad u\mapsto G(u) = y,
	\end{equation*}
	where $y$ is the unique weak solution of \eqref{eq:weak_heat}. Again, we consider the reduced problem
	\begin{equation*}
	\min_{u\in U_\mathrm{ad}} \hat{J}(u) = J(G(u), u).
	\end{equation*}
	Due to Theorem~\ref{thm:optimality_heat} the above problem has a unique minimizer $u^*\in U_\mathrm{ad}$ and we define $y^* := G(u^*)$, i.e., $y^*$ as the optimal state of the heat equation.
	
	In the following, we first show the convergence $u_i^* \rightharpoonup u^*$ in $L^2(0,T;L^2(\Omega))$ as well as $y_i^* \rightharpoonup y^*$ in $L^2(0,T;H^1_0(\Omega))$. 
	Since $u_i^*$ is the unique minimizer of $\hat{J}_i$ in $U_\mathrm{ad}$ it holds that
	\begin{equation}
		\label{eq:bound_on_u}
		\hat{J}_i(u_i^*) \leq \hat{J}_i(u^*) \text{ for all } i \in \mathbb{N}.
	\end{equation}
	Therefore, we have the estimate
	\begin{equation}
		\label{eq:helping_estimate}
		\norm{u_i^*}{L^2(0,T;L^2(\Omega))}^2 \leq C\ \hat{J}_i(u_i^*) \leq C\ \hat{J}_i(u^*) =\  C \left( \frac{1}{2} \norm{G_{c_i}(u^*)- y_d}{L^2(0,T;L^2(\Omega))}^2 + \frac{\lambda}{2} \norm{u^*}{L^2(0,T;L^2(\Omega))}^2 \right).
	\end{equation}
	Using Lemma~\ref{lem:boundedness} and \eqref{eq:helping_estimate}, we get that
	\begin{equation*}
		\norm{u_i^*}{L^2(0,T;L^2(\Omega))}^2 \leq C \left( \norm{u^*}{L^2(0,T;L^2(\Omega))}^2 + \norm{y_d}{L^2(0,T;L^2(\Omega))}^2 + \norm{y_0}{H^1_0(\Omega)}^2 + \norm{y_1}{L^2(\Omega)}^2 \right)
	\end{equation*}
	for a constant $C> 0$ which is independent of $i$. Using this together with Lemma~\ref{lem:boundedness} yields the estimate
	\begin{equation*}
		\norm{y_i^*}{L^2(0,T;H^1_0(\Omega))}^2 + \norm{(y_i^*)'}{L^2(0,T;L^2(\Omega))}^2 \leq C\left( \norm{u^*}{L^2(0,T;L^2(\Omega))}^2 + \norm{y_d}{L^2(0,T;L^2(\Omega))}^2 +\norm{y_0}{H^1_0(\Omega)}^2 + \norm{y_1}{L^2(\Omega)}^2 \right).
	\end{equation*}
	
	Therefore, we have the boundedness of the sequence $(u_i^*)$ in $L^2(0,T;L^2(\Omega))$ as well as the boundedness of both $(y_i^*)$ and $((y_i^*)')$ in $L^2(0,T;H^1_0(\Omega))$ and $L^2(0,T;L^2(\Omega))$, respectively, and can extract subsequences $(u_{i_j}^*)$, $(y_{i_j}^*)$ and $((y_{i_j}^*)')$ such that
	\begin{equation*}
		u_{i_j}^* \rightharpoonup \bar{u} \text{ in } L^2(0,T;L^2(\Omega)), \qquad y_{i_j}^* \rightharpoonup \bar{y} \text{ in } L^2(0,T;H^1_0(\Omega)), \text{ and } \qquad (y_{i_j}^*)' \rightharpoonup \bar{y}' \text{ in } L^2(0,T;L^2(\Omega)).
	\end{equation*}
	As before, $\bar{y}'$ is the weak time derivative of $\bar{y}$. With exactly the same arguments as in Appendix~\ref{proof:convergence_rateless} we observe that $\bar{y}\in W(0,T)$ is the unique weak solution of the heat equation with right-hand side $\bar{u}$, i.e., $\bar{y} = G(\bar{u})$.
	In particular, we get $\hat{J}(\bar{u}) = J(G(\bar{u}), \bar{u}) = J(\bar{y}, \bar{u})$.
	
%	Now, we aim to show that, in fact, $\bar{y} = y^*$ and $\bar{u} = u^*$ holds. 
%	Recall that we have
%	\begin{equation}
%		\label{eq:helping_limit}
%		\int\limits_{0}^{T} \tau_{i_j} \inner{(y_{i_j}^*)''}{v}{} + \left( (y_{i_j}^*)', v\right) + a[y_{i_j}^*,v] \dx{t} = \int\limits_{0}^{T} \left( u_{i_j}^*, v\right) \dx{t}
%	\end{equation}
%	for all $v\in L^2(0,T;H^1_0(\Omega))$ as well as $y_{i_j}^*(0) = y_0$ and $(y_{i_j}^*)'(0) = y_1$ for all $j$ since $y_i^* = G_{c_i}(u_i^*)$. With exactly the same arguments as in Appendix~\ref{proof:convergence_rateless} we observe that \eqref{eq:helping_limit} converges to
%	\begin{equation*}
%	\int\limits_{0}^{T} \left( \bar{y}', v\right) + a[\bar{y},v] \dx{t} = \int\limits_{0}^{T} \left( \bar{u}, v \right) \dx{t}
%	\end{equation*}
%	for all $v\in L^2(0,T;H^1_0(\Omega))$ and that we have the initial condition $\bar{y}(0) = y_0$. Again, this implies that $\bar{y}\in \hat{W}(0,T)$ is the unique weak solution of the heat equation with right-hand side $\bar{u}$, i.e., $\bar{y} = G(\bar{u})$.
%	In particular, we get $\hat{J}(\bar{u}) = J(G(\bar{u}), \bar{u}) = J(\bar{y}, \bar{u})$.
	
	Thanks to Theorem~\ref{thm:convergence_rateless} we get, for a fixed $u$, the strong convergence $G_{c_i}(u) \to G(u)$ in $L^2(0,T;L^2(\Omega))$, so that we also get $\hat{J}_i(u) \to \hat{J}(u)$. The weak lower semi-continuity of $J$ implies the estimate $J(\bar{y},\bar{u}) \leq \liminf_{i\to\infty} J(y_i^*,u_i^*)$.
	
	We use these results for the subsequence $i_j$ and obtain the following
	\begin{equation*}
			\hat{J}(\bar{u}) = J(\bar{y},\bar{u}) \leq \liminf_{j\to\infty} J(y_{i_j}^*,u_{i_j}^*) = \liminf_{j\to\infty} \hat{J}_{i_j}(u_{i_j}^*) \leq \liminf_{j\to\infty} \hat{J}_{i_j}(u^*) = \hat{J}(u^*).
	\end{equation*}
	Recall that $u^*$ is the unique minimizer of $\hat{J}$ in $U_\mathrm{ad}$. Therefore, we have $\bar{u} = u^*$ and, hence, directly $\bar{y} = G(\bar{u}) = G(u^*) = y^*$, which is what we claimed in the beginning. Due to the uniqueness of the weak time derivative we also get $\bar{y}' = (y^*)'$.
	
	Similarly to before, the uniqueness of the limit gives the convergence of the entire sequence, so that we get
	\begin{equation*}
		u_i^* \rightharpoonup u^* \text{ in } L^2(0,T;L^2(\Omega)), \qquad y_i^* \rightharpoonup y^* \text{ in } L^2(0,T;H^1_0(\Omega)), \text{ and } \qquad (y_i^*)' \rightharpoonup (y^*)' \text{ in } L^2(0,T;L^2(\Omega)).
	\end{equation*}
	
	We apply the Aubin-Lions Lemma as we did in Appendix~\ref{proof:convergence_rateless} and deduce the strong convergence
	\begin{equation*}
		y_i^* \to y^* \text{ in } L^2(0,T;L^2(\Omega)).
	\end{equation*}
	To derive the strong convergence of the controls, we need additional estimates. Due to the weak lower semicontinuity of the norm we have
	\begin{equation}
		\label{eq:estimate_chain}
		\norm{u^*}{L^2(0,T;L^2(\Omega))}^2 \leq \liminf_{i\to\infty} \norm{u^*_i}{L^2(0,T;L^2(\Omega))}^2 \leq \limsup_{i\to\infty} \norm{u^*_i}{L^2(0,T;L^2(\Omega))}^2.
	\end{equation}
	Furthermore, we have the estimate
	\begin{equation}
		\label{eq:estimate_control}
		\limsup_{i\to\infty} \hat{J}_i(u_i^*) \leq \limsup_{i\to\infty} \hat{J}_i(u^*) = \hat{J}(u^*),
	\end{equation}
	due to \eqref{eq:bound_on_u}. We rewrite the left-hand side of \eqref{eq:estimate_control} as
	\begin{equation*}
		\limsup_{i\to\infty} \hat{J}_i(u_i^*) = \frac{1}{2} \norm{y^* - y_d}{L^2(0,T;L^2(\Omega))}^2 + \limsup_{i\to\infty} \frac{\lambda}{2} \norm{u_i^*}{L^2(0,T;L^2(\Omega))}^2,
	\end{equation*}
	due to the strong convergence of $y_i^* \to y^*$ in $L^2(0,T;L^2(\Omega))$. Similarly, the right-hand side of \eqref{eq:estimate_control} is given by
	\begin{equation*}
	\hat{J}(u^*) = \frac{1}{2} \norm{y^* - y_d}{L^2(0,T;L^2(\Omega))}^2 + \frac{\lambda}{2} \norm{u^*}{L^2(0,T;L^2(\Omega))}^2.
	\end{equation*}
	Hence, \eqref{eq:estimate_control} reveals that
	\begin{equation*}
		\limsup_{i\to\infty} \norm{u_i^*}{L^2(0,T;L^2(\Omega))}^2 \leq \norm{u^*}{L^2(0,T;L^2(\Omega))}^2,
	\end{equation*}
	since $\lambda >0$. With this available, we extend the estimate \eqref{eq:estimate_chain} to
	\begin{equation*}
		\norm{u^*}{L^2(0,T;L^2(\Omega))}^2 \leq \liminf_{i\to\infty} \norm{u^*_i}{L^2(0,T;L^2(\Omega))}^2 \leq \limsup_{i\to\infty} \norm{u^*_i}{L^2(0,T;L^2(\Omega))}^2 \leq \norm{u^*}{L^2(0,T;L^2(\Omega))}^2.
	\end{equation*}
	As a result, we have that
	\begin{equation}
		\label{eq:norm_convergence}
		\lim\limits_{i\to\infty} \norm{u_i^*}{L^2(0,T;L^2(\Omega))}^2 = \norm{u^*}{L^2(0,T;L^2(\Omega))}^2.
	\end{equation}
	Hence, the weak convergence of the sequence $u_i^*$ together with \eqref{eq:norm_convergence} gives the strong convergence $u_i^* \to u^*$ in $L^2(0,T;L^2(\Omega))$ due to the Radon-Riesz theorem (cf.~\cite[Lemma~E8.5]{Alt2016Linear}), which completes the proof.
\end{proof}

\section*{References}
\bibliography{lit}

\end{document}